\DeclareMathOperator\diag{diag}
\DeclareMathOperator\codim{codim}
\DeclareMathOperator\Span{Span}
\DeclareMathOperator\HH{HH}
\DeclareMathOperator\ext{Ext}
\DeclareMathOperator\cac{ClassAlgConst}
\DeclareMathOperator\supp{supp}
\newcommand{\CC}{\mathbb{C}}
\newcommand{\ZZ}{\mathbb{Z}}
\newcommand{\ds}{\displaystyle}
\newcommand{\vol}[1]{\text{vol}_{#1}^{\perp}}
\newcommand{\mol}{\bullet}
\newcommand{\HHSSG}[1]{\HH^{#1}(S(V),S(V)\#G)}
\newcommand{\conjG}{\mathscr{C}_G}
\theoremstyle{plain}
\newtheorem{theorem}[subsection]{Theorem}
\newtheorem*{theorem*}{Theorem}
\newtheorem{proposition}[subsection]{Proposition}
\newtheorem{lemma}[subsection]{Lemma}
\newtheorem{observation}[subsection]{Observation}
\newtheorem{corollary}[subsection]{Corollary}
\theoremstyle{definition}
\newtheorem{definition}[subsection]{Definition}
\newtheorem*{remarks}{Remarks}
\newtheorem*{remark}{Remark}
\newtheorem{example}[subsection]{Example}
\title[Comparing Codimension and Absolute Length]
{Comparing Codimension and Absolute Length in Complex Reflection Groups}
\author{Briana Foster-Greenwood}
\date{\today}
\address{Department of Mathematics, University of North Texas,
Denton, Texas 76203, USA}
\email{BrianaFoster-Greenwood@my.unt.edu}
\thanks{2010 {\it Math Subject Classification: }05E15, 20F55, 06A06, 16E40.\\
        \indent{\it Keywords and phrases: }reflection groups, reflection length, codimension,
        partial orders, Hochschild cohomology.}
\begin{document}

\begin{abstract}
Reflection length and codimension of fixed point spaces induce
partial orders on a complex reflection group.
While these partial orders are of independent combinatorial interest,
our investigation is motivated by a connection between the codimension order
and the algebraic structure of cohomology governing deformations of
skew group algebras.
In this article, we compare the reflection length and codimension
functions and discuss implications for cohomology of skew group
algebras. 
We give algorithms using character theory for computing reflection length, atoms, 
and poset relations.
Using a mixture of theory, explicit examples, and computer calculations
in GAP, we show that Coxeter groups and the infinite family $G(m,1,n)$
are the only irreducible complex reflection groups 
for which the reflection length and codimension orders coincide.
We describe the
atoms in the codimension order for the infinite family $G(m,p,n)$,
which immediately yields an explicit description of generators for 
cohomology.
\end{abstract}

\maketitle

\section{Introduction}\label{INTRO}

The theory of real and complex reflection groups blends 
techniques and ideas from combinatorics,
representation theory, discrete geometry, invariant theory, and hyperplane
arrangements.  For example, various Hecke algebras are built from the
geometry of a Weyl group action.  Current research focuses on
Hecke algebras and variants constructed from complex reflection groups.
These deformations of algebras use both group theory and geometry.
In this article, we explore two partial orders on reflection groups:
a reflection length order (related to the word metric of geometric group theory)
and the codimension order (capturing the geometry of
the group action).
Various reflection length orders are key tools in the theory
of Coxeter groups, while codimension appears in the
numerology of complex reflection groups and in
recent work connecting combinatorics and Hochschild cohomology.

While the combinatorics of the reflection length and codimension
posets is of interest in its own right, our current motivation lies
in a connection to Hochschild cohomology of certain skew group algebras.
Given a finite group $G$ acting on a vector space $V$, one may form
a skew group algebra by taking the semi-direct product 
of the group algebra $\mathbb{C}G$ with the algebra of polynomial functions on $V^{*}$.
Formal deformations of skew group algebras
include graded Hecke algebras
(also referred to in the literature as
symplectic reflection algebras,
rational Cherednik algebras, and
Drinfeld Hecke algebras), which have been studied by
Lusztig~\cite{Lusztig}, 
Drinfeld~\cite{Drinfeld}, 
Etingof and Ginzburg~\cite{EtingofGinzburg}, 
and others.
Gordon~\cite{Gordon} used graded Hecke algebras to prove an analogue of 
the $n!$-conjecture for Weyl groups suggested by Haiman.

Hochschild cohomology detects potential associative deformations
of an algebra.  
The Hochschild cohomology ring of a skew group algebra may be identified with the
$G$-invariant subalgebra of a larger cohomology ring, which 
Shepler and Witherspoon~\cite{SheplerWitherspoonCup} show is finitely
generated (under cup-product) by elements corresponding to
atoms in the codimension order on $G$.

Reflections are always atoms in the codimension poset, as they have codimension one.  
Are these the only atoms in the codimension poset?  
In case $G$ acts by a reflection representation, we can answer this
question by comparing the reflection length and codimension functions.
The question has been answered in the affirmative 
for Coxeter groups (see Carter~\cite{Carter}) 
and the infinite family $G(m,1,n)$ (see Shi~\cite{ShiJ} or Shepler and Witherspoon~\cite{SheplerWitherspoonCup}).
In this article, we use a mixture of theory,
explicit examples, and computer calculations using the software GAP to show
reflection length and codimension {\it do not} coincide 
in the remaining monomial reflection groups $G(m,p,n)$ (see Section~\ref{GMPN})
and exceptional complex reflection groups (see Sections~\ref{RANKTWO} and \ref{EXCEPTIONAL}):
\medskip
\begin{theorem*}
   Let $G$ be an irreducible complex reflection group.  The reflection
   length and codimension functions coincide if and only if $G$ is a Coxeter
   group or $G=G(m,1,n)$.
\end{theorem*}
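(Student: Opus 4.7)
The plan is to combine two general principles with a case analysis driven by the Shephard--Todd classification. The first principle is the elementary inequality $\codim(g)\le\ell_R(g)$: any factorization $g=r_1\cdots r_k$ into reflections forces $V^g\supseteq V^{r_1}\cap\cdots\cap V^{r_k}$, whence $\codim(g)\le k$. The second is a reformulation tying the theorem to the cohomology discussion in the introduction: the equality $\codim=\ell_R$ holds throughout $G$ if and only if every atom of the codimension order is a reflection. One direction is an induction on codimension, using that a non-atom $g$ factors as $g=hh'$ with additive codimensions and both factors of strictly smaller codimension; the reverse is immediate from examining partial products of a shortest reflection factorization.

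The ``if'' direction is then covered by existing work: Carter's lemma for finite Coxeter groups and the Shi / Shepler--Witherspoon result for $G(m,1,n)$. The work is therefore to exhibit, in every remaining irreducible complex reflection group, an element $g$ with $\codim(g)<\ell_R(g)$---equivalently a codimension-order atom that is not a reflection. By Shephard--Todd, the groups to treat are the non-Coxeter monomial groups $G(m,p,n)$ (those with $p>1$ outside the dihedral case $G(m,m,2)$ and the type-$D$ case $G(2,2,n)$), together with the non-Coxeter exceptional groups.

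For the monomial family I would classify the atoms of the codimension order directly from the structure of reflections in $G(m,p,n)$: the diagonal reflections $\diag(1,\ldots,\eta,\ldots,1)$ with $\eta\in\mu_{m/p}\setminus\{1\}$, and the transposition-type reflections mixing two coordinates via compensating $m$-th roots of unity. For any candidate element one must decide whether some reflection lies below it in the codimension order with additive codimensions, which reduces to an arithmetic condition on the ``cycle products'' of the element relative to $\mu_{m/p}$. The expected outcome, matching the description advertised in the abstract, is an explicit list of atoms that includes non-reflections precisely when $p>1$ and $G(m,p,n)$ is not Coxeter.

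For the exceptional groups I would split off the rank-two cases from Section~\ref{RANKTWO}, where the fixed-space lattice is small enough for direct or character-theoretic arguments, and treat the remaining higher-rank exceptionals via the GAP algorithm referenced in the abstract: $\codim$ is read off the reflection character, and $\ell_R$ on each conjugacy class is computed from class algebra constants counting expressions as products of $k$ reflections, scanned upward from $k=\codim(g)$ until a nonzero count appears. The principal obstacle is the uniform monomial argument, since the arithmetic interplay between $p$ and the divisors of $m$ enters the atom classification in a case-sensitive way; once that classification is in place, combining it with the citations and the GAP checks closes the enumerated cases.
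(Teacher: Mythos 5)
Your proposal follows essentially the same route as the paper: the reduction of the function comparison to the statement that every codimension atom is a reflection, the citations to Carter and to Shi/Shepler--Witherspoon for the ``if'' direction, a classification of codimension atoms in $G(m,p,n)$ via an arithmetic condition on cycle-sums (the paper's notion of $p$-connectedness), and class-algebra-constant computations in GAP for the higher-rank exceptional groups. The one thing you leave unexecuted is the decisive step of actually producing the witnesses --- the paper exhibits $\diag(\zeta_m,\zeta_m^{p-1})\oplus I_{n-2}$ in $G(m,p,n)$ with $1<p<m$, $\diag(\zeta_m,\zeta_m^{-2},\zeta_m)\oplus I_{n-3}$ in $G(m,m,n)$ with $m,n\geq 3$, and specific length-three, codimension-two elements of $G_8$ and $G_{12}$ (the rank-two groups other than these two being handled by comparing the order of a central element with the orders of the reflections) --- but your outline correctly identifies where and how these must arise.
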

\medskip
When $G$ is not a Coxeter group or a monomial reflection group
$G(m,1,n)$, cohomology prompts us to look further to find the 
nonreflection atoms in the codimension
poset.  For the groups $G(m,p,n)$, we give an explicit combinatorial
description of the atoms in the codimension poset.
In the rank two exceptional complex reflection groups, 
we characterize the nonreflection atoms as the elements with reflection length
exceeding codimension.  For the remaining groups, we provide a 
count of the nonreflection atoms in the codimension poset.

In Section~\ref{COHOMOLOGY} we apply our results 
on the atoms in the codimension poset to obtain
information about the degrees and support of generators for Hochschild cohomology rings
arising in deformation theory. 

\subsection*{Acknowledgements} The author thanks Ph.D. advisor Anne Shepler
for suggesting this project and for helpful discussions.
The author also thanks Cathy Kriloff for suggestions on an early draft.
\\
\section{Reflection length and codimension posets}\label{POSETS}

\subsection*{Definitions}
Let $V$ be an $n$-dimensional vector space over $\mathbb{R}$ or $\mathbb{C}$.
An element $g$ in $GL(V)$ is a {\sffamily\bfseries reflection} if it has finite
order and fixes a hyperplane pointwise.
A {\sffamily\bfseries reflection group} is a finite group $G \subset GL(V)$ generated
by reflections.  
We define two class functions on a reflection group $G$  
and use these functions to partially order the group.
The {\sffamily\bfseries(absolute) reflection length}
of an element $g$ is the minimum number of
factors needed to 
write $g$ as a product of reflections:
$$
\ell(g)=\min\{k\::\:g=s_1\cdots s_k \textrm{ for some reflections $s_1,\ldots,s_k$ in $G$}\}.
$$
We set $\ell(1)=0$.  Note that this function gives length
with respect to \emph{all} reflections in the group, as opposed to
a set of fundamental or simple reflections. 
Each reflection in $G$ fixes a hyperplane pointwise,
and each remaining element fixes an intersection of hyperplanes. 
The {\sffamily\bfseries codimension} function on $G$ keeps track of the 
codimension of each fixed point space:
$$\codim(g)=n-\dim \{v \in V\::\:gv=v\}.$$
The reflection length and codimension functions satisfy the following properties:
\begin{itemize}
  \item constant on conjugacy classes
  \item subadditive: $\ell(ab)\leq\ell(a)+\ell(b)$ and $\codim(ab)\leq\codim(a)+\codim(b)$
  \item $\codim(g)\leq\ell(g)$ for all $g$ in $G$.
\end{itemize}

Now define the reflection length order on $G$ by
$$a \leq_{_{\ell}} c
\hspace{.25in}\Leftrightarrow\hspace{.25in}
\ell(a)+\ell(a^{-1}c)=\ell(c).
$$
Analogously\footnote{Brady and Watt~\cite{BradyWatt} prove $\leq_{_\perp}$
is a partial order.  Their proof is also valid when codimension is
replaced by any function $\mu:G\rightarrow[0,\infty)$ 
satisfying $\mu(a)=0$ iff $a=1$ (positive definite) and
$\mu(ab) \leq \mu(a)+\mu(b)$ for all $a$, $b$ in $G$ (subadditive).},
define the codimension order on $G$ by 
$$a \leq_{_\perp} c
\hspace{.25in}\Leftrightarrow\hspace{.25in}
\codim(a)+\codim(a^{-1}c)=\codim(c).
$$
Since reflection length and codimension are constant on conjugacy classes,
we get induced partial orders on the set of conjugacy classes of $G$.
Define the reflection length (likewise codimension) of a 
conjugacy class to be the reflection length (likewise codimension) of the elements
in the conjugacy class.  If $A$ and $C$ are conjugacy classes
of $G$, then set $A \leq_{_{\ell}} C$ if there exists an element $a \in A$
and an element $c \in C$ with $a \leq_{_{\ell}} c$.  Analogously, 
define the codimension order on conjugacy classes.
In Section~\ref{EXCEPTIONAL}, we will appeal to character theory to
deduce information about the partial orders on $G$ by working
with the (somewhat simpler) partial orders on the set of
conjugacy classes of $G$.

In a poset $(P,\leq)$, we say $b$ {\sffamily\bfseries covers} $a$
if $b > a$ and the interval \mbox{$\{x\in P\::\:a<x<b\}$} is empty.
The {\sffamily\bfseries atoms} of a poset are the covers of the minimum
element (when it exists).
The identity is the minimum element in the reflection length poset
and in the codimension poset.
For emphasis, we often refer to the atoms in
the codimension poset as {\sffamily\bfseries codimension atoms}.
Note that an element $a$ in $G$ is an atom in the poset on $G$
if and only if its conjugacy class is an atom in the corresponding
poset on the set of conjugacy classes of $G$. 

\subsection*{Functions versus posets}
We now show that 
comparing the length and codimension functions
is (in a sense) equivalent to comparing the set of atoms in each poset.

\begin{definition}
  We say $g=g_1 \cdots g_k$ is a {\sffamily\bfseries factorization of $g$ with codimensions adding}
  if $\codim(g)=\codim(g_1)+\cdots+\codim(g_k)$. 
\end{definition}

\noindent Note that if $g=g_1 \cdots g_k$ is a factorization with
codimensions adding, then, using the fact that codimension is subadditive
and constant on conjugacy classes, we also have $g_1,\ldots,g_k \leq_{_\perp} g$.
Furthermore, since $V$ is finite dimensional, we can work recursively
to factor any nonidentity element of $G$ into a product of codimension atoms with
codimensions adding:

\begin{observation}
  Given a nonidentity group element $g$, there exist codimension atoms \mbox{$a_1,\ldots,a_k \leq_{_\perp} g$}
  such that \mbox{$g=a_1\cdots a_k$} and $\codim(g)=\codim(a_1)+\cdots+\codim(a_k)$.
\end{observation}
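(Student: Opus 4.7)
The plan is to proceed by induction on $\codim(g)$, taking as the base case the situation when $g$ is itself a codimension atom (so that $k=1$ and $a_1 = g$ works trivially). Since $V$ is finite dimensional and $G$ is finite, codimension takes only finitely many values on $G$, so the induction is well-founded.

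For the inductive step I would first produce a codimension atom $a_1$ with $a_1 \leq_{_\perp} g$. To do this, consider the principal order ideal $\{x \in G \::\: x \leq_{_\perp} g\}$. This set is finite and contains the nonidentity element $g$ itself, so it contains at least one minimal nonidentity element $a_1$. Because $\leq_{_\perp}$ is a (transitive) partial order on $G$, any element strictly between $1$ and $a_1$ in the full poset would also lie in this ideal and contradict minimality; hence $a_1$ is genuinely a codimension atom in $G$, not merely in the subinterval $[1,g]$.

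Now set $h = a_1^{-1} g$. The definition of $a_1 \leq_{_\perp} g$ gives immediately $\codim(g) = \codim(a_1) + \codim(h)$, so $\codim(h) < \codim(g)$. If $h = 1$, the factorization is simply $g = a_1$. Otherwise the inductive hypothesis applied to $h$ yields codimension atoms $a_2, \ldots, a_k \leq_{_\perp} h$ with $h = a_2 \cdots a_k$ and $\codim(h) = \codim(a_2) + \cdots + \codim(a_k)$. Concatenating gives $g = a_1 a_2 \cdots a_k$, summing codimensions gives $\codim(g) = \codim(a_1) + \cdots + \codim(a_k)$, and transitivity of $\leq_{_\perp}$ promotes each $a_i \leq_{_\perp} h \leq_{_\perp} g$ to $a_i \leq_{_\perp} g$ for $i \geq 2$.

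The only point that needs any care is verifying that an atom of the interval $[1,g]$ is actually an atom of the whole codimension poset, which follows cleanly from transitivity of $\leq_{_\perp}$ as noted in the footnote citing Brady and Watt. Apart from this, the argument is just descent exploiting finiteness of $G$ and subadditivity of $\codim$, so I expect no genuine obstacle.
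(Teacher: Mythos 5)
Your argument is correct and takes essentially the same route as the paper, which simply asserts this recursive descent on codimension (peel off a minimal nonidentity element of the order ideal below $g$, note it is an atom by transitivity, and recurse on the complementary factor, whose codimension has strictly decreased). The one step you pass over silently is that $h=a_1^{-1}g\leq_{_\perp}g$, which requires observing that $h^{-1}g$ is conjugate to $a_1$ and that codimension is a class function --- precisely the remark the paper records immediately before the Observation.
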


The next two lemmas follow from repeated use of subadditivity of length and codimension
and the fact that codimension is bounded above by reflection length.

\begin{lemma}\label{forward}
   Fix $g$ in $G$.  If $\ell(a)=\codim(a)$ for every codimension atom $a \leq_{_\perp} g$, then $\ell(g)=\codim(g)$.
\end{lemma}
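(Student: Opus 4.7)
The plan is to combine the recursive factorization from the preceding observation with the universal bound $\codim(g) \leq \ell(g)$. Since the reverse inequality is always available, it suffices to show $\ell(g) \leq \codim(g)$ under the hypothesis.

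First I would dispose of the trivial case $g = 1$, where $\ell(g) = 0 = \codim(g)$. For $g \neq 1$, apply the observation to obtain codimension atoms $a_1, \ldots, a_k \leq_{_\perp} g$ with
\[
g = a_1 \cdots a_k \quad \text{and} \quad \codim(g) = \codim(a_1) + \cdots + \codim(a_k).
\]
By hypothesis, $\ell(a_i) = \codim(a_i)$ for each $i$, so each $a_i$ admits an expression as a product of exactly $\codim(a_i)$ reflections. Concatenating these factorizations produces an expression for $g$ as a product of $\codim(a_1) + \cdots + \codim(a_k) = \codim(g)$ reflections. Hence $\ell(g) \leq \codim(g)$, and combined with the universal inequality $\codim(g) \leq \ell(g)$ listed among the basic properties, we conclude $\ell(g) = \codim(g)$.

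There is no real obstacle here; the lemma is essentially an immediate consequence of the observation together with subadditivity of $\ell$ applied one atom at a time. The only subtlety worth remarking on is that the hypothesis is applied to atoms $a_i$ that lie below $g$ in $\leq_{_\perp}$ (which the observation guarantees), so we only invoke the assumption where it is given. This also makes the lemma tight: it reduces the question ``does $\ell$ agree with $\codim$ on all of $G$?'' to the much smaller question ``does $\ell$ agree with $\codim$ on the codimension atoms?''
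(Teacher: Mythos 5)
Your proof is correct and follows essentially the same route as the paper: both invoke the observation that $g$ factors into codimension atoms with codimensions adding, apply the hypothesis to each atom, and close the chain with the universal bound $\codim(g)\leq\ell(g)$. The only cosmetic difference is that you unwind subadditivity of $\ell$ by explicitly concatenating reflection factorizations, whereas the paper cites subadditivity directly.
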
   
\begin{proof}
  The statement certainly holds for the identity.
  Now let $g=a_1\cdots a_k$ be a factorization of $g \neq 1$ into atoms with codimensions adding.
  (Note that necessarily $a_{1},\ldots,a_{k} \leq_{_\perp} g$.)
  Then
    $$
     \ell(g)  \leq  \ell(a_1)+\cdots+\ell(a_k) 
              =     \codim(a_1)+\cdots+\codim(a_k) 
              =     \codim(g)
              \leq  \ell(g),$$
  with equality throughout.
\end{proof}  

\begin{lemma}\label{backward}   
   Let $g \in G$ with $\ell(g)=\codim(g)$.  If $h \leq_{_\ell} g$, then $h \leq_{_\perp} g$.
\end{lemma}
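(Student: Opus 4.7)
The plan is to chain together the three listed properties of $\codim$ and $\ell$ (subadditivity of $\codim$, the pointwise inequality $\codim \leq \ell$, and the hypothesis $\ell(g)=\codim(g)$) together with the unpacked definition of $h \leq_{_\ell} g$. Nothing deeper is needed; this is a two-line sandwich argument.

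More concretely, I would start by writing $g = h \cdot (h^{-1}g)$ and applying subadditivity of codimension to obtain
\[
\codim(g) \;\leq\; \codim(h) + \codim(h^{-1}g).
\]
Next I would bound each summand on the right using $\codim \leq \ell$, giving
\[
\codim(h) + \codim(h^{-1}g) \;\leq\; \ell(h) + \ell(h^{-1}g).
\]
The hypothesis $h \leq_{_\ell} g$ unpacks to $\ell(h) + \ell(h^{-1}g) = \ell(g)$, and the standing assumption $\ell(g) = \codim(g)$ then closes the chain back to $\codim(g)$.

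Since the first and last terms of the chain agree, every intermediate inequality must be an equality. In particular
\[
\codim(h) + \codim(h^{-1}g) = \codim(g),
\]
which is precisely the definition of $h \leq_{_\perp} g$.

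There is no real obstacle here: the proof is essentially a bookkeeping exercise, and the only thing to be careful about is to invoke subadditivity in the correct direction (for $\codim$, not $\ell$) so that the chain of inequalities points the right way to force equality from the hypothesis $\ell(g)=\codim(g)$.
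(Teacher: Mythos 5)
Your proof is correct and is essentially identical to the paper's: both establish $\codim(g) \leq \codim(h)+\codim(h^{-1}g)$ by subadditivity and then obtain the reverse inequality via $\codim(h)+\codim(h^{-1}g) \leq \ell(h)+\ell(h^{-1}g) = \ell(g) = \codim(g)$. Nothing to change.
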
   
\begin{proof}
  Subadditivity gives $\codim(g) \leq \codim(h)+\codim(h^{-1}g)$.  
  If $\ell(g)=\codim(g)$ and $h \leq_{_\ell} g$, we also have the reverse inequality:
  $$
  \codim(h)+\codim(h^{-1}g) \leq \ell(h) + \ell(h^{-1}g) = \ell(g) = \codim(g).
  $$
\end{proof}

Lemma~\ref{forward} and Lemma~\ref{backward} combine to
reveal that the reflection length and codimension functions 
coincide on all of $G$ if and only if every codimension atom
is a reflection.

\begin{proposition}\label{tfae}
  The following are equivalent:
  \begin{enumerate}
     \item $\ell(g)=\codim(g)$ for every $g$ in $G$.
     \item $\ell(g)=\codim(g)$ for every codimension atom $g$ in $G$.
     \item Every codimension atom is a reflection.
     \item For every $g\neq1$, there exists a reflection $s$ in $G$ such
           that $\codim(gs)<\codim(g)$.
  \end{enumerate}
\end{proposition}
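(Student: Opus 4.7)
The plan is to establish the cycle $(1) \Rightarrow (2) \Rightarrow (3) \Rightarrow (1)$ and to handle $(1) \Leftrightarrow (4)$ separately. The implication $(1) \Rightarrow (2)$ is immediate since codimension atoms are elements of $G$, and $(3) \Rightarrow (1)$ is essentially Lemma~\ref{forward}: by the preceding Observation any $g \neq 1$ factors as $g = a_1 \cdots a_k$ into codimension atoms with codimensions adding, and if each $a_i$ is a reflection (by $(3)$) then $k = \codim(a_1) + \cdots + \codim(a_k) = \codim(g)$, yielding $\ell(g) \leq k = \codim(g) \leq \ell(g)$.

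The central implication is $(2) \Rightarrow (3)$. Given a codimension atom $g$, set $c = \codim(g)$; by $(2)$, $\ell(g) = c$, so $g = s_1 \cdots s_c$ for some reflections $s_i$. Subadditivity gives $\codim(g) \leq \sum_i \codim(s_i) = c$, and this bound is saturated, so codimensions add. Regrouping as $g = s_1 \cdot (s_2 \cdots s_c)$ and applying subadditivity once more yields $\codim(s_1) + \codim(s_1^{-1}g) = \codim(g)$, i.e., $s_1 \leq_{_\perp} g$. Since $g$ is an atom in the codimension poset and $s_1 \neq 1$, we must have $s_1 = g$, forcing $c = 1$; hence $g$ itself is a reflection.

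For $(1) \Leftrightarrow (4)$: assuming $(1)$, take a minimal-length factorization $g = s_1 \cdots s_k$ with $k = \ell(g) \geq 1$. Then $\ell(g s_k^{-1}) \leq k - 1$, and applying $(1)$ to $g s_k^{-1}$ gives $\codim(g s_k^{-1}) \leq k - 1 < \codim(g)$; since $s_k^{-1}$ is again a reflection, $(4)$ holds. Conversely, assuming $(4)$, iteratively construct reflections $r_1, r_2, \ldots$ so that $\codim(g r_1 \cdots r_i)$ strictly decreases while nonzero. Because codimension is a nonnegative integer this terminates with $g r_1 \cdots r_m = 1$ for some $m \leq \codim(g)$, and inverting exhibits $g$ as a product of $m$ reflections, giving $\ell(g) \leq m \leq \codim(g) \leq \ell(g)$.

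No single step presents a real obstacle; the only subtle point is the regrouping argument in $(2) \Rightarrow (3)$, which promotes the ``codimensions adding'' expression $g = s_1 \cdots s_c$ into the single relation $s_1 \leq_{_\perp} g$ needed to activate the atom property of $g$.
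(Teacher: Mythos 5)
Your proof is correct and follows essentially the same route as the paper: your factorization into atoms for $(3)\Rightarrow(1)$ is exactly the paper's Lemma~\ref{forward}, and your regrouping argument for $(2)\Rightarrow(3)$ is precisely the content of the paper's Lemma~\ref{backward} applied with $h=s_1$ a reflection lying below the atom $g$. The only cosmetic difference is that you close statement (4) into the loop via a descent argument for $(4)\Rightarrow(1)$, whereas the paper routes it as $(4)\Rightarrow(3)$; both are equally straightforward.
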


\begin{proof}
   The implication (1) $\Rightarrow$ (2) is immediate.  
   Application of Lemma~\ref{backward} with $g$ a codimension atom and $h$ a reflection
   shows (2) $\Rightarrow$ (3).  Lastly, if every codimension atom is a 
   reflection, then the hypothesis of Lemma~\ref{forward} holds for each $g$
   in $G$, and hence (3) $\Rightarrow$ (1).   
   It is straightforward to work (4) into the loop via (1) $\Rightarrow$ (4) and (4) $\Rightarrow$ (3).
\end{proof}

\section{The infinite family $G(m,p,n)$}\label{GMPN}

The group $G(m,1,n)\cong(\ZZ/m\ZZ)^n\rtimes \mathfrak{S}_n$ consists of all $n \times n$ monomial matrices having $m^{th}$ roots
of unity for the nonzero entries.
For $p$ dividing $m$, the group $G(m,p,n)$ is the subgroup of $G(m,1,n)$ consisting
of those elements whose nonzero entries multiply to an $(\frac{m}{p})^{th}$
root of unity.
Throughout this section let $\zeta_m=e^{2\pi i/m}$.
Each group $G(m,p,n)$ contains the order two {\it transposition type reflections}
of the form $\delta\sigma$,
where $\sigma$ is a transposition 
swapping the $i^{th}$ and $j^{th}$ basis vectors, and 
$\delta=\diag(1,\ldots,\zeta_m^a,\ldots,\zeta_m^{-a},\ldots,1)$
scales rows $i$ and $j$ of $\sigma$ ($\delta=1$ is a possibility).  
When $p$ properly divides $m$, the group $G(m,p,n)$ also contains the
{\it diagonal reflections} $\diag(1,\ldots,\zeta_m^{a},\ldots,1)$
where $0<a<m$ and $p$ divides $a$.
The $G(m,p,n)$ family includes the following Coxeter groups ($n\geq2$):
\begin{itemize}
   \item symmetric group: $G(1,1,n)$ (not irreducible)
   \item Weyl groups of type $B_n$ and $C_n$: $G(2,1,n)$
   \item Weyl groups of type $D_n$: $G(2,2,n)$
   \item dihedral groups: $G(m,m,2)$
\end{itemize}

In this section, we describe the atoms in the codimension poset
for an arbitrary group $G(m,p,n)$.  In the groups for which the
reflection length and codimension functions do not coincide,
we give explicit examples of elements with length exceeding codimension.

\begin{definition}
  Let $V=V_1\oplus\cdots\oplus V_n$ be a decomposition of $V\cong\CC^n$
  into one-dimensional subspaces permuted by $G(m,p,n)$.  Let $g$ be in $G(m,p,n)$,
  and partition $\{V_1,\ldots,V_n\}$ into $g$-orbits, say 
  $\mathcal{O}_1,\ldots,\mathcal{O}_r$.  The action of $g$ on $\bigoplus_{V_{j}\in\mathcal{O}_i}V_j$
  can be expressed as $\delta_i\sigma_i$, where $\delta_i$ is diagonal and $\sigma_i$
  is a cyclic permutation. 
  (Thus, up to conjugation by a permutation matrix,
  $g$ is block diagonal with $i^{th}$ block $\delta_i\sigma_i$.)  
  The {\sffamily\bfseries cycle-sum} of $g$ corresponding to orbit $\mathcal{O}_i$ is
  the exponent $c_i$ (well-defined modulo $m$) such that \mbox{$\det(\delta_i)=\zeta_m^{c_i}$}.    
\end{definition}

Cycle-sums allow us to quickly read off codimension of an element:
$$\codim(g)=n-\#\{i\::\:c_i\equiv0\pmod m\}.$$

Note that for a reflection $t$ and any group element $g$, 
the relation $t\leq_{_{\perp}}g$ is
equivalent to $\codim(t^{-1}g)=\codim(g)-1$.
Letting $s=t^{-1}$ and noting that the conjugate elements
$sg$ and $gs$ have the same codimension,
we obtain the following convenient observation:

\begin{observation}
  An element $g\neq1$ is comparable with a reflection in the codimension poset
  if and only if
  there exists a reflection $s$ such that $\codim(gs)<\codim(g)$.
\end{observation}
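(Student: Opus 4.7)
The plan is to reduce the statement directly to the preceding observation, which says that for a reflection $t$, the relation $t \leq_{_\perp} g$ is equivalent to $\codim(t^{-1}g) = \codim(g) - 1$. The bridge between the ``comparable'' hypothesis and the codimension-drop conclusion is the conjugacy identity $gs = s^{-1}(sg)s$, which forces $\codim(sg) = \codim(gs)$.

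First I would dispense with the ``upward'' half of comparability. If $g \leq_{_\perp} t$ for some reflection $t$, then $\codim(g) + \codim(g^{-1}t) = 1$; since $g \neq 1$ gives $\codim(g) \geq 1$, we must have $\codim(g) = 1$ and $g = t$, and then $t \leq_{_\perp} g$ holds trivially. So for $g \neq 1$ it suffices to characterize when there exists a reflection $t$ with $t \leq_{_\perp} g$. For the forward direction I would take such a $t$, apply the preceding observation to get $\codim(t^{-1}g) = \codim(g) - 1$, and set $s = t^{-1}$ (still a reflection); the conjugacy $gs = s^{-1}(sg)s$ then yields $\codim(gs) = \codim(sg) = \codim(g) - 1 < \codim(g)$. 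For the reverse direction I would start from $\codim(gs) < \codim(g)$, invoke subadditivity
$$\codim(g) = \codim\bigl((gs)s^{-1}\bigr) \leq \codim(gs) + \codim(s^{-1}) = \codim(gs) + 1$$
to conclude $\codim(gs) = \codim(g) - 1$, transport this to $\codim(sg)$ by the same conjugacy, and then recover $t^{-1} \leq_{_\perp} g$ by applying the preceding observation with $t = s^{-1}$.

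There is no real obstacle; the proof is essentially bookkeeping on top of the preceding observation. The only point worth flagging is that reflections in a complex reflection group need not be involutions, so one should not assume $s^{-1} = s$ and must rely on the genuine conjugation $gs = s^{-1}(sg)s$ to identify $\codim(gs)$ with $\codim(sg)$.
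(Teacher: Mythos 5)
Your proposal is correct and follows essentially the same route as the paper, which derives the observation in the two sentences immediately preceding it: unwind $t\leq_{_\perp}g$ to $\codim(t^{-1}g)=\codim(g)-1$, set $s=t^{-1}$, and use that the conjugates $sg$ and $gs$ have equal codimension. Your added care in disposing of the case $g\leq_{_\perp}t$ (forcing $g$ itself to be a reflection) and in using subadditivity to upgrade $\codim(gs)<\codim(g)$ to $\codim(gs)=\codim(g)-1$ fills in details the paper leaves implicit, but introduces no new ideas.
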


We recall from Shi~\cite{ShiJ} (see Corollary 1.8 and the proof of Theorem 2.1)
the three possibilities for how the cycle-sums
change upon multiplying by a reflection:

\begin{lemma}[Shi~\cite{ShiJ}]\label{cyclesums}
   Let $g \in G(m,p,n)$ with cycle-sums $c_1,\ldots,c_r$ corresponding to
   $g$-orbits $\mathcal{O}_1,\ldots,\mathcal{O}_r$.
   If $s$ is a transposition type reflection interchanging $V_i$ and $V_j$, then
   the cycle-sums of $g$ split or merge into the cycle-sums of $gs$:
   \begin{enumerate}
      \item If $V_{i}$ and $V_{j}$ are in the same $g$-orbit, say $\mathcal{O}_k$,
            then $gs$ has cycle sums\\
            \mbox{$c_1,\ldots,\widehat{c_{k}},\ldots,c_r,d,c_k-d$} for some integer $d$. 
            \smallskip
      \item If $V_{i}$ and $V_j$ are in different $g$-orbits,
            say $\mathcal{O}_k$ and $\mathcal{O}_l$, 
            then $gs$ has cycle sums
            $c_1,\ldots,\widehat{c_{k}},\ldots,\widehat{c_l},\ldots,c_r,c_k+c_l$.
            \smallskip
   \end{enumerate}
   Let $s$ be a diagonal reflection scaling $V_i$ by non-1 eigenvalue $\zeta_m^{a}$
            (where $p$ divides $a$).
   \begin{enumerate}
      \addtocounter{enumi}{2}
      \item If $V_i$ is in the $g$-orbit $\mathcal{O}_k$, 
            then $gs$ has cycle sums $c_1,\ldots,c_k+a,\ldots,c_r$.
   \end{enumerate}
\end{lemma}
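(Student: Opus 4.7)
The plan is to convert the assertion into a standard colored-permutation calculation. Regard each $g \in G(m,p,n)$ as a permutation $\sigma$ of the lines $\{V_1,\ldots,V_n\}$ together with a coloring that assigns to each $V_j$ a scalar $\alpha_j = \zeta_m^{e_j}$ via $g(V_j) = \alpha_j V_{\sigma(j)}$. After conjugating by a permutation matrix we may arrange $g$ to be block diagonal with $k$th block $\delta_k\sigma_k$, and then the cycle-sum $c_k$ is the $\zeta_m$-exponent of $\det(\delta_k) = \prod_{V_j \in \mathcal{O}_k}\alpha_j$.

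For part (3), a diagonal reflection $s$ scaling $V_i$ by $\zeta_m^a$ preserves each line $V_j$ individually, so $gs$ has the same orbit partition as $g$. The only change in the coloring is that $\alpha_i$ is multiplied by $\zeta_m^a$, so the cycle-sum of the orbit $\mathcal{O}_k$ containing $V_i$ changes from $c_k$ to $c_k+a$ while every other cycle-sum is preserved.

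For parts (1) and (2), the crucial fact is that a transposition type reflection swapping $V_i$ and $V_j$ carries diagonal scalars $\zeta_m^a$ and $\zeta_m^{-a}$ whose product is $1$. Hence the underlying permutation of $gs$ is $\sigma \cdot (i\; j)$, and the coloring picks up factors $\zeta_m^{\pm a}$ that together contribute $1$ to any cycle containing both positions. In case (2), where $V_i \in \mathcal{O}_k$ and $V_j \in \mathcal{O}_l$ lie in distinct $g$-cycles, the transposition fuses these into a single $gs$-cycle whose color product equals $\det(\delta_k)\det(\delta_l) \cdot \zeta_m^a \zeta_m^{-a} = \zeta_m^{c_k+c_l}$, and the remaining cycle-sums are untouched. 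In case (1), where $V_i$ and $V_j$ lie in the common cycle $\mathcal{O}_k$, the transposition splits $\mathcal{O}_k$ into two disjoint $gs$-cycles whose color products partition the color product of $\mathcal{O}_k$ (again with $\zeta_m^{\pm a}$ absorbed), yielding cycle-sums $d$ and $c_k-d$ for an integer $d$ determined by the position of the swap.

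The main obstacle is bookkeeping: one must attach the $\zeta_m$-scalar consistently to each basis vector and verify that, under the appropriate multiplication convention, the product $gs$ has exactly the cycles of $\sigma \cdot (i\; j)$ equipped with the claimed color products. Once this colored-permutation framework is in place, each case reduces to the multiplicativity of the determinant on each diagonal block together with the single identity $\zeta_m^a \cdot \zeta_m^{-a} = 1$.
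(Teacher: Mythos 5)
Your argument is correct. Note, though, that the paper itself offers no proof of this lemma: it is quoted from Shi (Corollary 1.8 and the proof of Theorem 2.1 of that reference), so there is no internal argument to compare against. Your self-contained verification via colored permutations is the natural one and checks out: writing $g v_j = \alpha_j v_{\sigma(j)}$, the cycle-sum of an orbit is the $\zeta_m$-exponent of $\prod_{V_j\in\mathcal{O}_k}\alpha_j$, and right-multiplication by $s$ replaces $\sigma$ by $\sigma\cdot(i\;j)$ while altering only the colors at positions $i$ and $j$ by the reciprocal factors $\zeta_m^{\pm a}$, whose product is $1$; the split/merge dichotomy is then the standard behavior of multiplying a permutation by a transposition, and the color products over the affected orbits combine exactly as claimed (in case (1) the two new orbits partition $\mathcal{O}_k$, so their cycle-sums $d$ and $c_k-d$ must sum to $c_k$). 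The one convention worth stating explicitly is that $(gs)(v)=g(s(v))$, so the underlying permutation of $gs$ is $\sigma\circ(i\;j)$ and the new color at position $i$ is $\zeta_m^{-a}\alpha_j$ (and at $j$ is $\zeta_m^{a}\alpha_i$); with that pinned down, the bookkeeping you defer is genuinely routine. Your write-up thus supplies a proof where the paper supplies only a citation, which is a reasonable thing to want if one prefers the exposition to be self-contained.
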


Note that if $g$ is nondiagonal, then
by choosing a suitable transposition type reflection $s$, 
we can arrange for the cycle-sum $d$ of $gs$ 
in part (1) of Lemma~\ref{cyclesums} to be any of \mbox{$0,\ldots,m-1$}.
In particular, we can choose $s$ so that $d=0$, 
thereby increasing the number of zero cycle-sums and
decreasing codimension.
Hence {\it every nonreflection atom in the codimension poset must be diagonal}. 
The converse is false,
but we come closer to the set of nonreflection atoms 
by considering only $p$-connected diagonal elements.

\begin{definition}
  A diagonal matrix $g\neq1$ whose non-1 eigenvalues are $\zeta_{m}^{c_1},\ldots,\zeta_{m}^{c_k}$
  (listed with multiplicities) is
  {\sffamily\bfseries $p$-connected} if $p$ divides $c_1+\cdots+c_k$ but 
  $p$ does not divide $\sum_{i \in I}c_i$ for $I \subsetneq \{1,\ldots,k\}$.
  (Note that $g$ is in $G(m,p,n)$ iff $p$ divides $c_1+\cdots+c_k$.)
\end{definition}

It is easy to see that each nonidentity diagonal element of $G(m,p,n)$
factors in $G(m,p,n)$ into $p$-connected elements with codimensions adding.
Thus {\it every nonreflection atom in the codimension poset must be $p$-connected}.
We next check for poset relations among the reflections and $p$-connected elements.

\begin{lemma}
  The $p$-connected elements of $G(m,p,n)$
  are pairwise incomparable in the codimension poset.
\end{lemma}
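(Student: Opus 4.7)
The plan is to show that if $g \leq_{_\perp} h$ for $p$-connected elements $g,h$ of $G(m,p,n)$, then $g=h$; pairwise incomparability of distinct $p$-connected elements follows. Since $p$-connected elements are diagonal by definition, so is $g^{-1}h$, and the argument takes place entirely in the diagonal subgroup. Writing $g=\diag(\zeta_m^{a_1},\ldots,\zeta_m^{a_n})$ and $h=\diag(\zeta_m^{b_1},\ldots,\zeta_m^{b_n})$ with exponents reduced modulo $m$, set $A=\supp(g)=\{i:a_i\neq 0\}$, $B=\supp(h)$, and $C=\supp(g^{-1}h)=\{i:a_i\neq b_i\}$. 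Since the codimension of a diagonal matrix is the size of its support, the identity $\codim(g)+\codim(g^{-1}h)=\codim(h)$ becomes the combinatorial equation $|A|+|C|=|B|$.

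First I would refine the description of $C$ by a case split on each index: positions in $A\setminus B$ or $B\setminus A$ automatically lie in $C$, positions in $A\cap B$ lie in $C$ exactly when $a_i\neq b_i$, and positions in neither do not lie in $C$. This yields $|C|=|A\setminus B|+|B\setminus A|+\#\{i\in A\cap B:a_i\neq b_i\}$. Substituting into $|A|+|C|=|B|$ and using $|A|-|B|=|A\setminus B|-|B\setminus A|$, the equation collapses to $2|A\setminus B|+\#\{i\in A\cap B:a_i\neq b_i\}=0$. Because both summands are nonnegative, both must vanish, forcing $A\subseteq B$ and $a_i=b_i$ for all $i\in A$.

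To finish, I would invoke $p$-connectedness of $h$. Since $g$ is $p$-connected and nontrivial, $A$ is nonempty and $\sum_{i\in A}a_i\equiv 0\pmod p$. The equality $a_i=b_i$ on $A\subseteq B$ shows that $A$ indexes a nonempty subset of the non-$1$ eigenvalues of $h$ whose exponents sum to $0$ modulo $p$. The $p$-connectedness of $h$ rules out any proper nonempty such subset, so $A=B$; combined with $a_i=b_i$ on $A$, this gives $g=h$. The argument is essentially bookkeeping; the only subtle step is the case analysis that converts the single codimension identity into simultaneous conditions on $A\setminus B$ and on agreement of entries over $A\cap B$, so that both the support and the entries of $g$ must embed cleanly into those of $h$ before $p$-connectedness finishes the job.
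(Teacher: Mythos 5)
Your proof is correct and takes essentially the same route as the paper's: both arguments convert the additivity of codimension for diagonal elements into the statement that the non-$1$ eigenvalues of $h$ are those of $g$ together with those of $g^{-1}h$, and then play $p$-connectedness of $h$ against the fact that the exponents of $g$ sum to $0$ modulo $p$. The only difference is that you carry out explicitly the support bookkeeping that the paper compresses into ``it is not hard to show.''
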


\begin{proof}
  Suppose $a,b$ in $G(m,p,n)$ are diagonal elements such that $ab$ is
  $p$-connected and $\codim(a)+\codim(b)=\codim(ab)$.
  Since codimensions add, it is not hard to show that the non-1 eigenvalues
  of $ab$ are the non-1 eigenvalues $\zeta_{m}^{a_1},\ldots,\zeta_m^{a_{\codim(a)}}$ of $a$
  together with the non-1 eigenvalues $\zeta_m^{b_1},\ldots,\zeta_m^{b_{\codim(b)}}$ of $b$.
  If $a,b\neq1$, we have a contradiction to
  $p$-connectedness of $ab$, as $p$ divides $a_1+\cdots+a_{\codim(a)}$
  by virtue of $a$ being in $G(m,p,n)$.
\end{proof}


\begin{lemma}
  Let $g$ in $G(m,p,n)$ be $p$-connected and not a diagonal
  reflection.  Then there exists a
  reflection $s\leq_{_\perp}g$ if and only if $g$ has codimension two and
  non-1 eigenvalues $\zeta_m^c$ and $\zeta_m^{-c}$ for some $c$. 
\end{lemma}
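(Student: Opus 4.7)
The plan is to invoke the observation just preceding the lemma: a reflection $s$ satisfies $s\leq_{_\perp}g$ if and only if there is a reflection $s\in G(m,p,n)$ with $\codim(gs)<\codim(g)$, which by subadditivity of $\codim$ is the same as $\codim(gs)=\codim(g)-1$. Since $g$ is $p$-connected and not a diagonal reflection, the earlier discussion in the section forces $g$ to be diagonal with $k:=\codim(g)\geq 2$. Write the non-$1$ eigenvalues as $\zeta_m^{c_1},\ldots,\zeta_m^{c_k}$, so that $c_1,\ldots,c_k$ are exactly the nonzero cycle-sums of $g$; $p$-connectedness then prohibits $p\mid c_i$ for any single $i$ and prohibits $\sum_{i\in I}c_i\equiv 0\pmod p$ for any proper subset $I\subsetneq\{1,\ldots,k\}$.

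I would then walk through the two reflection families of $G(m,p,n)$ using Lemma~\ref{cyclesums}. A diagonal reflection scales some $V_i$ by $\zeta_m^{a}$ with $p\mid a$, altering only the cycle-sum $c_i\mapsto c_i+a$; to drop the codimension this must turn a nonzero $c_i$ into $0\pmod m$, forcing $p\mid c_i$ and contradicting $p$-connectedness as soon as $k\geq 2$. A transposition-type reflection swapping $V_i$ and $V_j$ (necessarily in distinct $g$-orbits, because $g$ is diagonal) replaces the cycle-sums $c_i,c_j$ by the single cycle-sum $c_i+c_j$ on a merged two-element orbit. A short four-case check on whether each of $c_i,c_j$ is zero modulo $m$ shows that the codimension decreases precisely when both $c_i,c_j$ are nonzero modulo $m$ and $c_i+c_j\equiv 0\pmod m$.

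Thus the question reduces to whether some pair from $\{c_1,\ldots,c_k\}$ sums to $0$ modulo $m$. If $k\geq 3$, then $\{c_i,c_j\}$ is a proper subset of the cycle-sum multiset, and $c_i+c_j\equiv 0\pmod m$ would imply $c_i+c_j\equiv 0\pmod p$ (since $p\mid m$), contradicting $p$-connectedness; hence no reflection lies below $g$. If $k=2$, $p$-connectedness already gives $c_1+c_2\equiv 0\pmod p$ for free, and the additional requirement $c_1+c_2\equiv 0\pmod m$ is exactly the statement that the non-$1$ eigenvalues are $\zeta_m^{c_1}$ and $\zeta_m^{-c_1}$, as desired. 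The only delicate point is keeping the two moduli straight: codimension drops are governed modulo $m$, while $p$-connectedness is a condition modulo $p$, and the bridge between them is the divisibility $p\mid m$.
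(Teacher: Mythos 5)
Your proof is correct and follows essentially the same route as the paper: both directions reduce, via the observation that $t\leq_{_\perp}g$ iff $\codim(gs)<\codim(g)$ for some reflection $s$, to a case analysis on diagonal versus transposition-type reflections using Lemma~\ref{cyclesums} and the $p$-connectedness condition on proper subsets of cycle-sums. The only cosmetic difference is in the ``if'' direction, where the paper exhibits an explicit factorization of $g$ into two reflections while you obtain the required reflection below $g$ from the cycle-sum merging rule; both are valid.
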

\begin{proof}
  If $g$ has codimension two and non-1 eigenvalues $\zeta_m^{c}$ and $\zeta_m^{-c}$,
  then $g$ factors into two reflections with codimensions adding.
  The factorization in dimension two illustrates the general case:
  $$
  \left(
   \begin{array}{cc}
      \zeta_m^c &             \\
        & \zeta_m^{-c}           \\
   \end{array}
   \right)
   =
   \left(
   \begin{array}{cc}
       &  \zeta_m^{c}           \\
      \zeta_m^{-c}  &            \\
   \end{array}
   \right)
   \left(
   \begin{array}{cc}
        & 1            \\
      1 &            \\
   \end{array}
   \right).
  $$
  Conversely, if $g$ is comparable with a reflection, then there must be a 
  reflection $s$ with $\codim(gs)<\codim(g)$.
  If $s$ is a diagonal type reflection, then we get a contradiction
  to $p$-connectedness of $g$.  If $s$ is a transposition type
  reflection, then, since $g$ is diagonal, we use Lemma~\ref{cyclesums} (2)
  to see that $g$ must have nonzero cycle-sums $c_k$ and $c_l$ such that
  $c_k+c_l\equiv0 \pmod m$.  By $p$-connectedness, $c_k$ and $c_l$
  must be the only nonzero cycle-sums of $g$, and hence the only non-1
  eigenvalues of $g$ are $\zeta_m^{c_k}$ and $\zeta_m^{c_l}=\zeta_m^{-c_k}$.
\end{proof}

Since every element of $G(m,p,n)$ must be above {\it some} atom,
we now have the collection of codimension atoms:

\begin{proposition}\label{GmpnAtoms}
  The codimension atoms for $G(m,p,n)$ are the reflections together
  with the $p$-connected elements except for those with codimension two
  and determinant one.
\end{proposition}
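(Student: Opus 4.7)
The plan is to assemble the three preceding lemmas together with the cycle-sum observations into a single classification of atoms. Reflections have codimension one and so are automatically atoms, so the real work lies in deciding which nonreflection elements of $G(m,p,n)$ cover the identity.

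First I would dispose of the easy non-atoms. A $p$-connected element $g$ of codimension two with determinant one has non-1 eigenvalues $\zeta_m^c$ and $\zeta_m^{-c}$ for some $c$, and the explicit factorization displayed in the preceding lemma writes $g$ as a product of two reflections with codimensions adding, placing a reflection strictly below $g$. For a nonreflection $g$ that is not $p$-connected, I would show $g$ is not an atom by arguing as in the text preceding Proposition~\ref{GmpnAtoms}: if $g$ is nondiagonal, Lemma~\ref{cyclesums}(1) with the split parameter $d$ chosen to equal zero produces a transposition type reflection $s$ with $\codim(gs)<\codim(g)$; if $g$ is diagonal but not $p$-connected, the definition of $p$-connectedness supplies a proper partition of the non-1 eigenvalues into blocks whose exponent sums are each divisible by $p$, yielding a factorization of $g$ in $G(m,p,n)$ into smaller $p$-connected pieces with codimensions adding.

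The main step is to prove that every remaining $p$-connected element $g$ (a diagonal reflection, or one with codimension at least three, or with codimension two and determinant not equal to one) is in fact an atom. Suppose for contradiction $1 <_{_\perp} h <_{_\perp} g$. Using the observation that every nonidentity element factors into codimension atoms with codimensions adding, I would extract a codimension atom $a \leq_{_\perp} h$ satisfying $a \leq_{_\perp} g$ and $\codim(a) < \codim(g)$. If $a$ is a reflection, then the preceding lemma forces $g$ to have codimension two with non-1 eigenvalues $\zeta_m^c$ and $\zeta_m^{-c}$, contradicting our hypothesis on $g$. Otherwise $a$ is a nonreflection atom, hence diagonal and $p$-connected by the structural reductions already established; but then the pairwise incomparability of $p$-connected elements contradicts $a <_{_\perp} g$.

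The main obstacle I anticipate is a mild circularity worry: the reduction ``nonreflection atoms are $p$-connected'' is invoked inside the very classification we are proving. Inspecting the arguments carefully, however, the passages from the text which force a nonreflection atom to be diagonal and then to be $p$-connected are \emph{necessary conditions} derived directly from Lemma~\ref{cyclesums} and the orbit-block decomposition, and do not presuppose any classification. Thus the contradiction in the main step can be invoked safely, and the proposition follows.
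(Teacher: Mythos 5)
Your proposal is correct and follows essentially the same route as the paper: the paper establishes exactly your three ingredients (nonreflection atoms must be diagonal and $p$-connected, $p$-connected elements are pairwise incomparable, and a reflection lies below a $p$-connected nonreflection element precisely in the codimension-two determinant-one case) and then concludes the proposition in one sentence by noting every element lies above some atom. Your proof-by-contradiction in the ``main step'' is just an explicit unpacking of that final sentence, and your resolution of the circularity worry is sound, since the two structural reductions are indeed necessary conditions proved independently of the classification.
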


It is known that length and codimension coincide for Coxeter groups
and the family $G(m,1,n)$, 
which, incidentally, includes the rank one groups $G(m,p,1)=G(\frac{m}{p},1,1)$.
For the remaining groups in the family $G(m,p,n)$, we give
explicit examples of codimension atoms with reflection length exceeding
codimension.

\begin{corollary}\label{gmpnnope}
  The reflection length and codimension functions do not coincide in the
  following groups:
  \begin{itemize}
     \item $G(m,p,n)$ with $1<p<m$ and $n\geq2$
     \item $G(m,m,n)$ with $m \geq 3$ and $n\geq3$.
  \end{itemize}
\end{corollary}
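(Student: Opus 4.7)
The plan is to invoke Proposition~\ref{tfae}: in each case, it suffices to exhibit a single codimension atom that is not a reflection. For such an atom $g$, the inequality $\ell(g)>\codim(g)$ follows automatically, since any factorization of $g$ into $\codim(g)$ reflections would have codimensions adding (by subadditivity of $\codim$), producing a reflection strictly between $1$ and $g$ in the codimension order and contradicting atomicity. By Proposition~\ref{GmpnAtoms}, the nonreflection atoms of $G(m,p,n)$ are precisely the $p$-connected elements outside the excluded class of codimension-two, determinant-one elements. The whole task reduces to writing down, in each of the two cases, an explicit diagonal element that is $p$-connected and dodges the exception.

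For $G(m,p,n)$ with $1<p<m$ and $n\geq 2$, I would take
$$
g_{1}=\diag(\zeta_{m},\,\zeta_{m}^{p-1},\,1,\ldots,1).
$$
The product of the nonzero entries is $\zeta_{m}^{p}$, an $(m/p)^{\text{th}}$ root of unity, so $g_{1}\in G(m,p,n)$. Its nontrivial exponents $1$ and $p-1$ sum to $p$ (divisible by $p$), while neither lies in $p\ZZ$ (both being strictly between $0$ and $p$), so $g_{1}$ is $p$-connected. Since $\det(g_{1})=\zeta_{m}^{p}\neq 1$, the codimension-two exception does not apply, so $g_{1}$ is a codimension atom; being of codimension two it is not a reflection.

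For $G(m,m,n)$ with $m\geq 3$ and $n\geq 3$, every diagonal matrix in the group has determinant one, so one cannot use a nontrivial determinant to escape the exception and must instead step up to codimension three. I would take
$$
g_{2}=\diag(\zeta_{m},\,\zeta_{m},\,\zeta_{m}^{m-2},\,1,\ldots,1).
$$
The nontrivial exponents sum to $m$, so $g_{2}\in G(m,m,n)$; and the proper nonempty subsums $1,\,2,\,m-2,\,m-1$ all lie in the open interval $(0,m)$ (using $m\geq 3$) and so are not divisible by $m$. Thus $g_{2}$ is $m$-connected, and its codimension three places it outside the excluded class, giving a nonreflection atom.

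The main obstacle is the second case: with $p=m$, every diagonal element has determinant one, which forces us to leave codimension two and hence demands $n\geq 3$. This is also the reason the dihedral groups $G(m,m,2)$ (where no three-dimensional $g_{2}$ fits) are absent from the statement, consistent with their being Coxeter groups.
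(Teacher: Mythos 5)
Your proof is correct, and you have chosen exactly the same witness elements as the paper: your $g_{1}$ is the paper's $M_{2}\oplus I_{n-2}$, and your $g_{2}$ is the paper's $M_{3}\oplus I_{n-3}$ up to permuting diagonal entries (note $\zeta_m^{m-2}=\zeta_m^{-2}$). Where you differ is in how you certify that these elements have length exceeding codimension. The paper simply asserts the exact reflection lengths (three and four, respectively), leaving the verification implicit (it can be read off from Shi's length formula or checked directly), whereas you never compute a reflection length at all: you verify $p$-connectedness and the non-applicability of the codimension-two, determinant-one exception, invoke Proposition~\ref{GmpnAtoms} to conclude the elements are nonreflection codimension atoms, and then use Proposition~\ref{tfae} (or, more locally, the observation that a factorization into $\codim(g)$ reflections would have codimensions adding and place a reflection strictly below $g$, contradicting atomicity). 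Your route is more self-contained relative to the machinery the paper has already built and yields the slightly weaker conclusion $\ell(g)>\codim(g)$ rather than the exact lengths; the paper's version buys concrete length values but at the cost of an unjustified computational claim inside the proof. Your closing remark about $G(m,m,2)$ correctly explains why the dihedral case is excluded and is consistent with the paper's classification.
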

\begin{proof}
Let $I_k$ be the $k \times k$ identity matrix, and let $M_2$ and $M_3$ be the matrices
$$
   M_2=\left(
   \begin{array}{cc}
      \zeta_{_{m}} &             \\
       & \zeta_{_{m}}^{^{p-1}}   \\
   \end{array}
   \right)
   \textrm{ and }
   M_3=\left(
   \begin{array}{ccc}
      \zeta_{_{m}} &                     &              \\
                   & \zeta_{_{m}}^{^{-2}} &             \\
                   &                     & \zeta_{_{m}} \\
   \end{array}
   \right).
$$
In $G(m,p,n)$ with $1<p<m$ and $n \geq 2$,
the direct sum matrix $M_2 \oplus I_{n-2}$
has reflection length three and codimension two.
In $G(m,m,n)$ with $m \geq 3$ and $n \geq 3$, 
the direct sum matrix $M_3 \oplus I_{n-3}$
has reflection length four and codimension three.
\end{proof}

\begin{remarks}\hfill
\begin{itemize}
  \item A $1$-connected element must be a diagonal reflection,
        so the set of codimension atoms in $G(m,1,n)$ is simply
        the set of reflections.  By Lemma~\ref{tfae}, this recovers the result that
        length and codimension coincide in $G(m,1,n)$.
  \item Shi \cite{ShiJ} gives a formula for reflection length in $G(m,p,n)$
        in terms of a maximum over certain partitions of cycle-sums.
        He also uses existence of a certain partition of the cycle-sums
        as a necessary and sufficient 
        condition for an element to have reflection length equal to codimension.
\end{itemize}
\end{remarks}

\section{Rank two exceptional reflection groups}\label{RANKTWO}

The complex reflection groups $G_{4}-G_{22}$ act irreducibly on $V\cong\mathbb{C}^{2}$.
Each has at least one conjugacy class of elements 
for which length and codimension differ.
In all except for $G_{8}$ and $G_{12}$, 
an argument comparing the order of the reflections with the order
of the center of the group demonstrates the existence of a 
central element with length greater than codimension.

\begin{lemma}\label{zorder}
  Let $G$ be an irreducible complex reflection group acting on $V\cong\CC^2$.
  If $z\in G$ is central and $G$ does not contain any
  reflections with the same order as $z$, then $\ell(z) > \codim(z)$.
\end{lemma}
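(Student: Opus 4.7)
My plan is to combine Schur's lemma with a short two-dimensional case analysis. Since $G$ acts irreducibly on $V\cong\CC^2$, any central element of $G$ acts as a scalar by Schur's lemma, so $z=\lambda I_V$ for some root of unity $\lambda$. We may assume $z\neq 1$ (otherwise the strict inequality $\ell(z)>\codim(z)$ fails). Then $\lambda\neq 1$, so $z$ has no nonzero fixed vectors and $\codim(z)=2$. It thus suffices to show $\ell(z)\geq 3$, that is, $z$ is not a product of at most two reflections of $G$.

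First, I would rule out $\ell(z)\leq 1$: if $z$ were itself a reflection in $G$, then $G$ would contain a reflection (namely $z$) of the same order as $z$, directly contradicting the hypothesis. For $\ell(z)=2$, suppose $z=s_1s_2$ with $s_1,s_2$ reflections in $G$. Rearranging gives $s_2=\lambda s_1^{-1}$, so $s_1$ and $s_2$ are simultaneously diagonalizable. Writing $s_1=\diag(1,\zeta)$ in a suitable basis (where $\zeta\neq 1$ is the nontrivial eigenvalue of $s_1$), I obtain $s_2=\diag(\lambda,\lambda\zeta^{-1})$ in the same basis. For $s_2$ to qualify as a reflection, one of these diagonal entries must equal $1$; since $\lambda\neq 1$, this forces $\zeta=\lambda$. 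But then $s_1$ has nontrivial eigenvalue $\lambda$, so the order of $s_1$ equals the order of $\lambda$, which is the order of $z$ — contradicting the hypothesis. Hence $\ell(z)\geq 3>2=\codim(z)$.

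The argument is essentially pure linear algebra in dimension two, and I anticipate no significant obstacle. The only key observation is that the equation $s_1s_2=\lambda I_V$ immediately forces $s_2=\lambda s_1^{-1}$ and hence simultaneous diagonalization with $s_1$; once this is in hand, the rest reduces to matching eigenvalues against the reflection condition.
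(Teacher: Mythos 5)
Your proof is correct and follows essentially the same route as the paper's: identify $z$ as a nonidentity scalar matrix via Schur's lemma (so $\codim(z)=2$), observe that $z=s_1s_2$ forces $s_1$ and $s_2$ to be simultaneously diagonalizable, and match eigenvalues to conclude the reflections would have the same order as $z$. The paper phrases the key step as ``$s$ and $t$ commute,'' while you write $s_2=\lambda s_1^{-1}$ directly, but this is the same simultaneous-diagonalization argument.
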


\begin{proof}
  Since $G$ acts irreducibly on $V \cong \CC^2$, 
  each central element $z \neq 1$ is represented by a scalar matrix 
  of codimension two.
  Note that if $z=st$ is a product of two reflections, then
  $s$ and $t$ are actually {\it commuting} reflections.
  Then, working with $s$, $t$, and $z$ simultaneously in diagonal form,
  it is easy to deduce that the reflections $s$ and $t$ must have
  the same order as $z$.  Thus if $G$ does not contain any reflections of the same order
  as $z$, we have $\ell(z) > 2$.
\end{proof}

Note that if $g$ is a group element with $\codim(g)=2$, then
$\ell(g)=\codim(g)$ if and only if $g$ can
be expressed as a product of two reflections.  Thus, we describe
the codimension atoms for a rank two reflection group:

\begin{lemma}
  The codimension atoms in a rank two complex reflection
  group are the reflections together with the elements $g$ 
  such that $\ell(g)>\codim(g)$.
\end{lemma}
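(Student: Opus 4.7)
The plan is to exploit the fact that in a rank two group the possible codimensions are only $0$, $1$, and $2$, so there is very little room for atoms to hide. I would first dispose of the trivial cases: the identity has codimension $0$ (the minimum, not an atom), and any element with codimension $1$ is a reflection and is automatically an atom, since there is no nonidentity element strictly below it in the codimension order. So the only interesting case is codimension~$2$.

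Next, for a fixed $g$ with $\codim(g)=2$, I would translate ``$g$ is not an atom'' into a statement about reflections lying below $g$. Since atoms of the codimension poset always have codimension $1$ (any element above the identity sits above a codimension atom by the earlier observation, and codimension atoms have codimension at most the codimension of anything above them), $g$ fails to be an atom precisely when there exists a reflection $s$ with $s<_{_\perp}g$. By the definition of $\leq_{_\perp}$, this means $\codim(s)+\codim(s^{-1}g)=\codim(g)=2$, forcing $\codim(s^{-1}g)=1$, i.e.\ $s^{-1}g$ is also a reflection. Setting $t=s^{-1}g$ gives $g=st$ as a product of two reflections, so $\ell(g)\leq 2=\codim(g)$ and hence $\ell(g)=\codim(g)$. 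Conversely, if $\ell(g)=\codim(g)=2$, then any factorization $g=st$ into two reflections has $\codim(s)+\codim(t)=2=\codim(g)$, so $s<_{_\perp}g$ and $g$ is not an atom. This is exactly the content of the remark immediately preceding the lemma, which I would invoke directly.

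Combining, for $\codim(g)=2$ we have: $g$ is an atom if and only if $\ell(g)>\codim(g)$. Together with the fact that reflections are atoms, this gives that the atoms are the reflections together with the codimension-$2$ elements $g$ satisfying $\ell(g)>\codim(g)$. Finally, I would observe that in rank two, an element with $\ell(g)>\codim(g)$ automatically has $\codim(g)=2$, since the identity satisfies $\ell=\codim=0$ and reflections satisfy $\ell=\codim=1$. Thus the set of codimension-$2$ elements with $\ell>\codim$ coincides with the set of all elements satisfying $\ell(g)>\codim(g)$, yielding the stated description of the atoms.

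There is no real obstacle here: the lemma is essentially an unpacking of the definitions once one notices that codimension takes only three values, and the substantive step—that $\ell(g)=\codim(g)=2$ is equivalent to being a product of two reflections above which a reflection sits in the codimension order—is already recorded in the note preceding the lemma. The main thing to be careful about is matching the two characterizations of non-atom status (existence of a reflection below $g$ versus being expressible as a product of two reflections), which is immediate from the dimension count $1+1=2$.
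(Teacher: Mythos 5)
Your proof is correct and is essentially the paper's own argument: the paper states this lemma without a separate proof, relying on exactly the observations you make (codimension takes only the values $0,1,2$ in rank two, and the note preceding the lemma that a codimension-two element satisfies $\ell(g)=\codim(g)$ if and only if it is a product of two reflections). One small caveat: your parenthetical claim that ``atoms of the codimension poset always have codimension $1$'' is false in general --- the whole point of the paper is that nonreflection atoms of codimension $\geq 2$ exist --- but the step it supports is still valid here, since by your own dimension count $1+1=2$ any element strictly between the identity and a codimension-two element must have codimension one and hence be a reflection.
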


\begin{proposition}\label{ranktwonope}
   Reflection length and codimension do not coincide in the
   rank two exceptional complex reflection groups $G_{4}-G_{22}$.
\end{proposition}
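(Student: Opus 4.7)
The plan is to handle $G_4,\ldots,G_{22}$ uniformly via Lemma~\ref{zorder} in the nineteen cases where it applies, and to treat $G_8$ and $G_{12}$ separately. For each of the nineteen groups, I would consult the standard Shephard--Todd data (reflection orders and the order of the center, readily available in Lehrer--Taylor or via GAP's CHEVIE package) and identify a nonidentity central element $z$ whose order strictly exceeds every reflection order in the group. Since such a $z$ is a scalar matrix and thus has codimension two, Lemma~\ref{zorder} immediately delivers $\ell(z) > 2 = \codim(z)$, so reflection length and codimension disagree on $z$.

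For $G_8$ (order $96$, reflections of order $4$, center of order $4$) and $G_{12}$ (order $48$, reflections of order $2$, center of order $2$), the orders of the reflections match the order of the center; in fact, in each case the generator of $Z(G)$ can itself be written as a product of two commuting reflections of the same order, so Lemma~\ref{zorder} genuinely fails and one must produce a noncentral witness. By the lemma preceding Proposition~\ref{ranktwonope}, it suffices to exhibit a noncentral element $g$ of codimension two that cannot be written as a product of two reflections; such a $g$ is then automatically a codimension atom with $\ell(g) > \codim(g) = 2$.

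To locate such a $g$ in each of $G_8$ and $G_{12}$, I would enumerate the conjugacy classes of elements whose fixed space is trivial (the codimension-two classes) and for each decide whether the class meets the set $\{st : s,t \text{ reflections}\}$. This can be done by a short GAP computation or by the character-theoretic method developed in Section~\ref{EXCEPTIONAL}. The main obstacle is $G_{12}$, where every reflection is an involution and the pool of products $st$ is correspondingly restricted; one must point to a specific conjugacy class of codimension-two elements (of order $3$, $4$, or $6$, say) that simply does not appear as a product of two involutions in the group. A clean resolution is to write down an explicit representative whose eigenvalues are incompatible with those of any product of two reflections in $G_{12}$, thereby certifying $\ell(g) > \codim(g)$ by an eigenvalue argument in the spirit of Lemma~\ref{zorder}.
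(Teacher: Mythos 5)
Your proposal follows essentially the same route as the paper: Lemma~\ref{zorder} disposes of all the rank-two exceptional groups except $G_8$ and $G_{12}$ (seventeen of the nineteen, not nineteen), and for those two you reduce, exactly as the paper does, to exhibiting a codimension-two element that is not a product of two reflections --- by direct computation for $G_8$ and by a determinant/eigenvalue parity argument for $G_{12}$. The only substantive difference is that the paper writes down the explicit witnesses ($g=r_1(r_1r_2^2r_1^{-1})r_2$ in $G_8$ and $ST$ in $G_{12}$) and verifies them, whereas you defer that step to an enumeration.
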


\begin{proof}
Inspection of Tables I, II, and III in Shephard-Todd~\cite{ShephardTodd} 
and application of Lemma~\ref{zorder} shows that each rank two group $G_{i}$ with $i \neq 8$ or $12$
has a central element $z$ with $\ell(z)>\codim(z)$.

The group $G_{8}$ can be generated by the order four reflections
$$
   r_1=\left(
   \begin{array}{cc}
      i &             \\
        & 1           \\
   \end{array}
   \right)
   \textrm{ and }
   r_2=\frac{1}{2}\left(
   \begin{array}{ccc}
     \phantom{-}1+i & 1+i       \\
     -1-i & 1+i       \\
   \end{array}
   \right).
$$
The element 
$$g=r_1(r_{1}r_{2}^2r_{1}^{-1})r_{2}
   =\frac{1}{2}\left(
   \begin{array}{ccc}
     -1+i & \phantom{-}1-i       \\
     -1-i & -1-i       \\
   \end{array}
   \right)
$$
has
length three and codimension two.  
Note that if $g$ were the product of two reflections, then
$gs$ would be a reflection for some reflection $s$ in $G_{8}$.
However, computation shows $\codim(gs)=2$ 
for all reflections $s$ in $G_{8}$.

  For $G_{12}$, let $S$ and $T$ be the generators given in 
  Shephard-Todd~\cite{ShephardTodd}.  Although $S$ is a reflection,
  the element $T$ has codimension two.
  We express $T$ as the product of two
  reflections (each a conjugate of $S$):
  $$T=(STST^{-1}S^{-1})(T^{-1}ST^{-1}STS^{-1}T).$$
  The element $ST$ has length three and codimension two.  (We verify the
  length by noting that all reflections in $G_{12}$ have determinant $-1$,
  so $ST$ also has determinant $-1$ and must have odd length.)

\end{proof}

\begin{remarks}  Carter~\cite{Carter} proves length equals codimension in Weyl groups.
   Although Carter's proof applies equally well to any Coxeter group, we
   indicate two places where the proof can break down for a general complex reflection group.
  \begin{itemize}
    \item Carter's proof shows that in a real reflection group, 
          if $g$ has maximum codimension, i.e., $\codim(g)=n$, then
          $\codim(gs) < n$ for all reflections $s$ in the group.
          This may fail in a general complex reflection group,
          as illustrated by the element $g$ in $G_8$ given above
          in the proof of Proposition~\ref{ranktwonope}.
    \item Though $G_{12}$ only has order two reflections, 
          Carter's proof fails for $G_{12}$ because a complex
          inner product is not symmetric. 
 \end{itemize}
\end{remarks}

\section{Exceptional reflection groups $G_{23}-G_{37}$}\label{EXCEPTIONAL}

For the exceptional reflection groups, we work with the partial
orders on the set $\conjG$ of conjugacy classes of $G$.  
With the aid of the software GAP~\cite{GAP}\nocite{CHEVIE}, we compute 
reflection length, atoms, and poset relations,
appealing to character theory to speed up the computations.
Some of the exceptional reflection groups are Coxeter groups, for which reflection length and codimension
are known to agree.  For the remaining groups, our computations show reflection length
and codimension do not coincide.
 
We first recall class algebra constants, 
which we use to aid our computations.
Let $X$, $Y$, and $C$ be conjugacy classes of $G$, and let
$c$ be a fixed representative of $C$.
The class algebra constant $\cac(X,Y,C)$
counts the number of pairs $(x,y)$ in $X \times Y$ such that $xy=c$.
These are the structure constants for the center of the group
algebra and have a formula in terms of the irreducible characters of $G$
(details can be found in James and Liebeck~\cite{JamesLiebeck}, for example).

Using class algebra constants, we can inductively find the elements
of each reflection length without having to multiply individual group elements.
Let $L(k)$ denote the set of conjugacy classes whose elements
have reflection length $k$.
Suppose conjugacy class $C$ is not in $L(0) \cup\cdots\cup L(k)$ so
that $\ell(C)$ is at least $k+1$.
Then $C$ is in $L(k+1)$ if and only if $\cac(X,Y,C)$ is
nonzero for some $X$ in $L(1)$ and $Y$ in $L(k)$.
Since the class algebra constants are nonnegative, we have
$\ell(C)=k+1$ if and only if
$\sum\{\cac(X,Y,C)\::\:X\in L(1)\text{ and }Y\in L(k)\} \neq 0.$

Using the same idea, we can easily compute all relations in the
reflection length and codimension posets on the set of conjugacy classes of $G$.
For example, in the codimension poset, we have 
$$A \leq_{_\perp} C  \phantom{ssss}\Leftrightarrow\phantom{ssss} 
\sum_{\stackrel{X \in \conjG}{\codim(A)+\codim(X)=\codim(C)}}\hspace{-.72in}\cac(A,X,C)\neq0.$$
In particular,
\begin{center}
{\renewcommand{\arraystretch}{1.5}
\begin{tabular}{ccp{3in}} 
$C$ is an atom in $(\conjG,\leq_{_\perp})$ & \phantom{ssss}$\Leftrightarrow$\phantom{ssss} &
$\ds\sum_{\stackrel{X,Y \in \conjG\backslash\{\{1\},C\}}{\codim(X)+\codim(Y)=\codim(C)}}\hspace{-.72in}\cac(X,Y,C)=0.$
\end{tabular}}
\end{center}

%

Table~\ref{tab:AtomCount} summarizes the data collected for the groups $G_{23}-G_{37}$.
(The Coxeter groups are included for contrast.)
The middle columns compare the number of conjugacy classes of nonreflection atoms
with the number of conjugacy classes $C$ such that $\ell(C)\neq\codim(C)$.
The final columns compare maximum reflection length with
the dimension $n$ of the vector space on which the group acts.  Note that in each
case the maximum reflection length is at most $2n-1$, usually less.

\begin{table}
\centering
   \tiny{\renewcommand{\arraystretch}{1.5}{\begin{tabular}{|r||r||rr||rr|}
   \hline
      \textbf{group} & \textbf{\# conj classes} & \textbf{\# length$\neq$codim} & \textbf{\# nonref atoms} & \textbf{$\dim V$} & \textbf{max ref length} \\
      \hline
      23 &  10 &  0 &  0 & 3 &  3 \\
      24 &  12 &  2 &  2 & 3 &  4 \\
      25 &  24 &  3 &  1 & 3 &  4 \\
      26 &  48 &  9 &  5 & 3 &  4 \\
      27 &  34 & 12 & 12 & 3 &  5 \\
      \hline
      28 &  25 &  0 &  0 & 4 &  4 \\
      29 &  37 & 10 &  4 & 4 &  6 \\
      30 &  34 &  0 &  0 & 4 &  4 \\
      31 &  59 & 27 &  5 & 4 &  6 \\
      32 & 102 & 27 &  6 & 4 &  6 \\
      \hline
      33 &  40 & 12 &  6 & 5 &  7 \\
      \hline
      34 & 169 & 78 & 14 & 6 & 10 \\
      35 &  25 &  0 &  0 & 6 &  6 \\
      \hline
      36 &  60 &  0 &  0 & 7 &  7 \\
      \hline
      37 & 112 &  0 &  0 & 8 &  8 \\
      \hline
   \end{tabular}   }}
   \vskip \baselineskip
	\caption{Atom count in $(\conjG,\leq_{_{\perp}})$ for exceptional reflection groups $G_{23}-G_{37}$}
	\label{tab:AtomCount}
\end{table}

\section{Conclusion}\label{CONCLUSION}
Combining the existing results for Coxeter groups and $G(m,1,n)$ with
our computations for the remaining irreducible complex
reflection groups, we complete the determination of which reflection
groups have length equal to codimension.

\begin{theorem}
   Let $G$ be an irreducible complex reflection group.  The reflection
   length and codimension functions coincide if and only if $G$ is a Coxeter
   group or $G=G(m,1,n)$.
\end{theorem}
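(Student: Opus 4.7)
The plan is to assemble the results of the previous sections using the Shephard--Todd classification of irreducible complex reflection groups: every such group is either a member of the infinite family $G(m,p,n)$ or one of the 34 exceptional groups $G_4,\ldots,G_{37}$. The two directions of the biconditional split cleanly across this classification.

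For the forward (if) direction, I would treat the two listed cases separately. If $G$ is an irreducible Coxeter group, the equality $\ell=\codim$ is classical (Carter). If $G=G(m,1,n)$, then by the first remark following Corollary~\ref{gmpnnope}, every $1$-connected element is a diagonal reflection; combined with Proposition~\ref{GmpnAtoms}, this shows every codimension atom in $G(m,1,n)$ is a reflection, and then Proposition~\ref{tfae} upgrades this to $\ell(g)=\codim(g)$ for all $g$.

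For the converse, I need to show that every irreducible complex reflection group outside these two classes has some element with length strictly exceeding codimension. Within the infinite family, the irreducible Coxeter members are exactly $G(2,1,n)$, $G(2,2,n)$, and $G(m,m,2)$ (as listed in Section~\ref{GMPN}). Any other $G(m,p,n)$ that is not $G(m,1,n)$ must satisfy either $1<p<m$ with $n\geq 2$, or $p=m$ with $m\geq 3$ and $n\geq 3$; these are exactly the two cases of Corollary~\ref{gmpnnope}, which produces explicit diagonal witnesses $M_2\oplus I_{n-2}$ and $M_3\oplus I_{n-3}$.

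For the exceptional groups, $G_4$--$G_{22}$ are rank two and are handled uniformly by Proposition~\ref{ranktwonope}. Among $G_{23}$--$G_{37}$, the Coxeter members are $G_{23}=H_3$, $G_{28}=F_4$, $G_{30}=H_4$, $G_{35}=E_6$, $G_{36}=E_7$, and $G_{37}=E_8$. Table~\ref{tab:AtomCount} records a zero in the ``\# length$\neq$codim'' column precisely for these six groups, and a strictly positive count for each of the remaining non-Coxeter exceptional groups $G_{24}, G_{25}, G_{26}, G_{27}, G_{29}, G_{31}, G_{32}, G_{33}, G_{34}$, furnishing the required conjugacy classes via the class-algebra-constant computations of Section~\ref{EXCEPTIONAL}. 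The main obstacle is not mathematical but bookkeeping: one must verify (a) that the listed Coxeter parameters in $G(m,p,n)$ exhaust all Coxeter members of the infinite family, (b) that the two cases of Corollary~\ref{gmpnnope} jointly cover every remaining triple after deleting $G(m,1,n)$, and (c) that the Shephard--Todd numbering of exceptional Coxeter groups matches exactly the zero rows of Table~\ref{tab:AtomCount}. Once this casework is confirmed, no further argument is needed.
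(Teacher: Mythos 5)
Your proposal is correct and follows essentially the same route as the paper: Carter for Coxeter groups and the atom/$1$-connectedness argument (equivalently, Shi's result) for $G(m,1,n)$ in one direction, and Corollary~\ref{gmpnnope}, Proposition~\ref{ranktwonope}, and Table~\ref{tab:AtomCount} for the converse. The bookkeeping you flag is exactly the casework the paper implicitly relies on, and it checks out (in particular, $G(m,p,1)=G(\frac{m}{p},1,1)$ disposes of the rank one cases).
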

\begin{proof}
   Carter's proof~\cite{Carter} that reflection length coincides with 
   codimension in Weyl groups works just as well for Coxeter groups,
   and 
   Shi~\cite{ShiJ} proves reflection length coincides with codimension in
   the infinite family $G(m,1,n)$ 
   (also see Shepler and Witherspoon~\cite{SheplerWitherspoonCup}
   for a more linear algebraic proof).
   For the converse, Corollary~\ref{gmpnnope} gives counterexamples for the 
   remaining groups in the family $G(m,p,n)$, while
   Proposition~\ref{ranktwonope} and Table~\ref{tab:AtomCount} 
   show reflection length and codimension
   do not coincide in the non-Coxeter exceptional complex reflection groups.
\end{proof}

\section{Applications to Cohomology}\label{COHOMOLOGY}
The codimension poset has applications to Hochschild
cohomology and deformation theory of skew group algebras
$S(V)\#G$ for a finite group $G$ acting linearly on $V$.  
Deformations of skew group algebras include 
graded Hecke algebras and symplectic reflection algebras.
Hochschild cohomology detects potential deformations.
For a $\CC$-algebra $A$ and an $A$-bimodule $M$, the
Hochschild cohomology of $A$ with coefficients in $M$ is the space
$$
\HH^{\bullet}(A,M)=\ext_{A\otimes A^{\text{op}}}^{\bullet}(A,M),
$$
where we tensor over $\CC$.
When $M=A$, we simply write $\HH^{\bullet}(A)$.  
We refer the reader to Gerstenhaber and Schack~\cite{GerstenhaberSchack} for more on
algebraic deformation theory and Hochschild cohomology.  

In the setting of skew group algebras, Hochschild cohomology may
be formulated in terms of invariant theory.
\c{S}tefan~\cite{Stefan} finds cohomology of the skew group algebra
$S(V)\#G$ as the space of $G$-invariants in
a larger cohomology ring:
$$\HH^{\bullet}(S(V)\#G)\cong\HH^{\bullet}(S(V),S(V)\#G)^G,$$
and Farinati~\cite{Farinati} and Ginzburg and Kaledin~\cite{GinzburgKaledin}
describe the larger cohomology ring:
$$
\HH^{\bullet}(S(V),S(V)\#G) \cong \bigoplus_{g \in G}
\Bigl(\:
   S(V^{g}) \otimes 
   \bigwedge^{\bullet-\codim(g)}(V^g)^{*} \otimes
   \bigwedge^{\codim(g)}\bigl((V^g)^*\bigr)^{\perp} \otimes
   \mathbb{C}g
\:\Bigr),
$$
which we identify with a subspace of $S(V) \otimes \bigwedge^{\bullet} V^* \otimes \CC G$.
Here, $V^g=\{v \in V\::\:gv=v\}$ denotes the fixed point space of $g$.

Shepler and Witherspoon~\cite{SheplerWitherspoonCup}
further show that the cohomology $\HH^{\bullet}(S(V),S(V)\#G)$ is generated
as an algebra under cup product by $\HH^{\bullet}(S(V))$ 
together with derivation forms corresponding to atoms in the codimension poset. 
More specifically,
for each $g$ in $G$, fix a choice of volume form
$\vol{g}$ in the one-dimensional space $\bigwedge^{\codim(g)}\bigl((V^g)^*\bigr)^{\perp}$.
(If $s$ is a reflection, we may take $\vol{s}$ in $V^{*}$ to be
a linear form defining the hyperplane about which $s$ reflects.)
Then ~\cite[Corollary 9.4]{SheplerWitherspoonCup}
asserts that the cohomology ring
$\HH^{\bullet}(S(V),S(V)\#G)$ is generated by
$\HH^{\bullet}(S(V))\cong S(V)\otimes\bigwedge^{\bullet}V^*\otimes 1_{G}$ and the set of volume forms tagged by codimension atoms:
$$
\{1\otimes\vol{g}\otimes g\::\:g \text{ is an atom in the codimension poset for $G$}\}.
$$

\begin{example}
Consider the group $G=G(m,p,n)$ acting on $V\cong\CC^n$ by its standard
reflection representation.  Let $v_1,\ldots,v_n$ denote the standard basis
of $V$ and $v_1^*,\ldots,v_n^*$ the dual basis of $V^*$.
As in Section~\ref{GMPN}, let $\zeta_m=e^{2\pi i/m}$.

Proposition~\ref{GmpnAtoms} describes the codimension atoms
in $G(m,p,n)$, and we can easily find the corresponding volume forms
$\vol{g}$.  
The cohomology $\HH^{\bullet}(S(V),S(V)\#G(m,p,n))$
is thus generated as a ring under cup product by $\HH^{\bullet}(S(V))$ and the elements
\begin{itemize}
  \item $1\otimes(v_i^*-\zeta_m^c v_j^{*}) \otimes s$, where $s$ is a reflection about the hyperplane $v_i^*-\zeta_m^c v_j^*=0$, and
  \item $1\otimes v_{i_1}^*\wedge \cdots \wedge v_{i_{\codim(g)}}^* \otimes g$, 
        where $g$ is $p$-connected and $v_{i_1},\ldots,v_{i_{\codim(g)}}$ form a basis of $(V^g)^{\perp}$.
\end{itemize}
(Note that we have included the elements $1\otimes(v_{i_1}^*\wedge v_{i_2}^*)\otimes g$ with $\det(g)=1$,
but these do not arise from codimension atoms and are superfluous generators.)
\end{example}

Shepler and Witherspoon~\cite[Corollary 10.6]{SheplerWitherspoonCup}
show that if $G$ is a Coxeter group or $G=G(m,1,n)$, then,
in analogy with the Hochschild-Kostant-Rosenberg Theorem,
the cohomology $\HHSSG{\mol}$ is generated
in cohomological degrees $0$ and $1$.  
We use our comparison of the reflection length
and codimension posets to show this analogue fails for 
the other irreducible complex reflection groups.

We recall from~\cite[Section 8]{SheplerWitherspoonCup} the volume algebra
$A_{\text{vol}}:=\Span_{\CC}\{1\otimes\vol{g}\otimes g\::\:g\in G\}$, isomorphic to
a (generalized) twisted group algebra with multiplication
$$
(1\otimes\vol{g}\otimes g)\smile(1\otimes\vol{h}\otimes h)=\theta(g,h)(1\otimes\vol{gh}\otimes gh)
$$
for some cocycle $\theta:G\times G\rightarrow\CC$.
The cocycle $\theta$ is generalized in that its values may include zero;
in fact, the {\sffamily\bfseries twisting constant} $\theta(g,h)$ is nonzero if
and only if $g\leq_{_\perp}gh$.  Iterating the product formula, we find
$$(1\otimes\vol{g_1}\otimes g_1)\smile\cdots\smile(1\otimes\vol{g_k}\otimes g_k)
=\lambda(1\otimes\vol{g_1\cdots g_k}\otimes g_1\cdots g_k),$$
where $\lambda=\theta(g_1,g_2)\theta(g_1g_2,g_3)\cdots\theta(g_1\cdots g_{k-1},g_k)$.
The twisting constant $\lambda$ is nonzero if and only if 
$g_1\leq_{_{\perp}} g_1g_2 \leq_{_\perp}\cdots\leq_{_{\perp}}g_1\cdots g_k$.
We make use of this fact in the proof of Lemma~\ref{supportlemma} below.

Once a choice of volume forms $\vol{g}$ has been made, then given an
element $\alpha$ in $\HHSSG{\mol}$, there
exist unique elements $\alpha_g$ in $S(V^{g})\otimes\bigwedge^{\mol}(V^{g})^*$
such that
$$
\alpha=\sum_{g\in G}\alpha_g\otimes\vol{g}\otimes g.
$$
Let the {\sffamily\bfseries support} of $\alpha$ be $\supp(\alpha)=\{g\in G\::\:\alpha_g\neq0\}$.
For a set $B\subset\HHSSG{\mol}$, let $\supp(B)=\bigcup_{\beta\in B}\supp(\beta)$.
In the next lemma, we relate the support of a subring of $\HHSSG{\mol}$
to the support of a set of generators for the subring.

\begin{lemma}\label{supportlemma}
  Let $B$ be a subring of $\HHSSG{\mol}$, and let $\mathcal{G}(B)$
  be a set of generators for $B$ as a ring under cup product.  
  If $g$ is in $\supp(B)$, then there exist group elements $g_1,\ldots,g_k$ 
  in $\supp(\mathcal{G}(B))$
  such that $g_1\leq_{_\perp}g_1g_2\leq_{_{\perp}}\cdots\leq_{_{\perp}}g_1\cdots g_k=g$.
\end{lemma}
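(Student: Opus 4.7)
The plan is to trace the element $g \in \supp(B)$ back through the generating set $\mathcal{G}(B)$, using the iterated volume-algebra multiplication formula recalled just above the lemma.

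First I pick $\alpha \in B$ with $\alpha_g \neq 0$, which exists by definition of $\supp(B)$. Because $\mathcal{G}(B)$ generates $B$ under cup product, I write
$$
\alpha = \sum_i c_i\, \beta_{i,1} \smile \beta_{i,2} \smile \cdots \smile \beta_{i,k_i},
$$
with $c_i \in \CC$ and each $\beta_{i,j} \in \mathcal{G}(B)$. Expanding each $\beta_{i,j}$ along its own support as $\beta_{i,j} = \sum_{h \in \supp(\beta_{i,j})} (\beta_{i,j})_h \otimes \vol{h} \otimes h$, distributing the cup products, and regrouping produces $\alpha$ as a sum of terms indexed by $i$ together with a tuple $(h_1, \ldots, h_{k_i})$ satisfying $h_j \in \supp(\beta_{i,j})$.

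Next I isolate the $g$-summand: only tuples with $h_1 h_2 \cdots h_{k_i} = g$ contribute. For each such tuple the iterated volume-algebra formula produces a scalar
$$
\lambda = \theta(h_1, h_2)\, \theta(h_1 h_2, h_3) \cdots \theta(h_1 \cdots h_{k_i-1}, h_{k_i})
$$
as a global factor on the term. As recalled in the excerpt, $\lambda \neq 0$ if and only if
$$
h_1 \leq_{_\perp} h_1 h_2 \leq_{_\perp} \cdots \leq_{_\perp} h_1 \cdots h_{k_i} = g
$$
holds in the codimension poset. Since $\alpha_g \neq 0$, the total of these contributions is nonzero, so at least one individual term must be nonzero; in particular that term's $\lambda$ is nonzero, and the corresponding $g_1 = h_1, \ldots, g_k = h_{k_i}$ lie in $\supp(\mathcal{G}(B))$ and give the chain demanded by the lemma.

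The only genuine obstacle is confirming that the iterated volume-algebra formula actually governs the cup product of general elements $(\alpha_g \otimes \vol{g} \otimes g)$, not merely the pure volume elements $(1 \otimes \vol{g} \otimes g)$ --- that is, that $\lambda$ really factors out as a scalar so that $\lambda = 0$ forces the whole term to vanish regardless of the polynomial and exterior-form coefficients. This is built into the cup-product formula of Section 8 of Shepler--Witherspoon, where $\theta$ encodes exactly the obstruction to $\vol{g}$ and the complementary top form on the $h$-side combining into a nonzero element of $\bigwedge^{\codim(gh)}((V^{gh})^*)^\perp$, and I would cite that formula directly rather than rederive it.
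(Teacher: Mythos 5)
Your proposal is correct and follows essentially the same route as the paper: expand an element of $B$ as a sum of cup products of generators, use the iterated cup-product formula to see that each $g$-summand carries the twisting constant $\theta(g_1,g_2)\cdots\theta(g_1\cdots g_{k-1},g_k)$ as a scalar factor, and conclude that a nonzero $g$-component forces some tuple to satisfy the chain of codimension relations. The subtlety you flag at the end (that $\lambda$ factors out of general summands, not just pure volume elements) is exactly what the paper addresses by writing a typical summand as $\omega\otimes\lambda\vol{g}\otimes g$ and citing Equation (7.4) of Shepler--Witherspoon.
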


\begin{proof}
First consider the support of
a finite cup product $\beta_1\smile\cdots\smile\beta_k$
of generators $\beta_i$ from $\mathcal{G}(B)$.  
Using the cup product 
formula~\cite[Equation (7.4)]{SheplerWitherspoonCup}\footnote{Note that the
factor $dv_g\wedge dv_h$ in Equation (7.4) may not a priori be an element of $\bigwedge^{\mol} (V^{gh})^*$.
To interpret the equation correctly, we must apply to the wedge product $dv_g\wedge dv_h$ the projection
$\bigwedge^{\mol}V^*\rightarrow\bigwedge^{\mol}(V^{gh})^*$
induced by the orthogonal projection $V^*\rightarrow (V^{gh})^*$.
After the last iteration of the cup product formula, we also apply the
projections $S(V)\rightarrow S(V)/I((V^{g})^{\perp})\cong S(V^{g})$ to the polynomial parts to obtain
a representative in $\HHSSG{\mol}$.},
we find that a typical summand of $\beta_1\smile\cdots\smile\beta_k$ has the form
$$
\omega \otimes \theta(g_1,g_2)\theta(g_1g_2,g_3)\cdots\theta(g_1\cdots g_{k-1},g_k)\vol{g}\otimes g,
$$
where each $g_i$ is in $\supp(\beta_i)$, $g=g_1\cdots g_k$, and $\omega$ is a
(possibly zero) derivation form in 
$S(V^g)\otimes \bigwedge^{\mol}(V^{g})^*$.
The scalar
$$\theta(g_1,g_2)\theta(g_1g_2,g_3)\cdots\theta(g_1\cdots g_{k-1},g_k)$$
is a twisting constant from the volume algebra and, as noted above,
is nonzero if and only if
$g_1\leq_{_{\perp}}g_1g_2\leq_{_{\perp}}\cdots\leq_{_{\perp}}g_1\cdots g_k$.
Thus $$\supp(\beta_1\smile\cdots\smile\beta_k)\subseteq\{g_1\cdots g_k:g_i\in\supp(\beta_i)\text{ and }g_1\leq_{_{\perp}}g_1g_2\leq_{_{\perp}}\cdots\leq_{_{\perp}}g_1\cdots g_k\}.$$
Now note that for arbitrary elements $\alpha_1,\ldots,\alpha_k$ in $B$, we have
$$
\supp(\alpha_1+\cdots+\alpha_k)\subseteq\supp(\alpha_1)\cup\cdots\cup\supp(\alpha_k).
$$
This proves the lemma since every element of $B$ is a sum of finite 
cup products of elements of $\mathcal{G}(B)$.
\end{proof}

\begin{corollary}\label{codimsupport}
The set of codimension atoms for $G$ is contained in the support 
of every generating set for $\HHSSG{\mol}$.
\end{corollary}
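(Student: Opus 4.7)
The plan is to apply Lemma~\ref{supportlemma} with $B=\HHSSG{\mol}$ to a cohomology element whose support contains a given codimension atom, and then use the atom property to force that atom into the support of any generating set. Fix a codimension atom $g$ in $G$. The element $1\otimes\vol{g}\otimes g$ lies in $\HHSSG{\mol}$, so $g\in\supp(B)$. Given any generating set $\mathcal{G}(B)$ for $B$ under cup product, Lemma~\ref{supportlemma} yields a sequence $g_1,\ldots,g_k$ of elements in $\supp(\mathcal{G}(B))$ with
$$
g_1\leq_{_\perp}g_1g_2\leq_{_\perp}\cdots\leq_{_\perp}g_1\cdots g_k=g.
$$

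Next I would exploit the fact that $g$ is an atom in the codimension poset. Each partial product $g_1\cdots g_i$ satisfies $g_1\cdots g_i\leq_{_\perp}g$, and since the only elements $\leq_{_\perp}g$ are $1$ and $g$, each partial product equals $1$ or $g$. Let $i$ be the smallest index with $g_1\cdots g_i=g$; for $j<i$ the partial product is $1$, so in particular $g_1\cdots g_{i-1}=1$ (or $i=1$), which forces $g_i=g$. Hence $g$ itself appears in the list $g_1,\ldots,g_k$, and therefore $g\in\supp(\mathcal{G}(B))$. Since this holds for every codimension atom $g$, the corollary follows.

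I do not expect a serious obstacle here: the proof is essentially a direct deduction from Lemma~\ref{supportlemma} combined with the definition of an atom. The only small subtlety is that one must observe that the chain condition $g_1\leq_{_\perp}g_1g_2\leq_{_\perp}\cdots\leq_{_\perp}g$ forces every partial product to lie in the principal order ideal generated by $g$, which is $\{1,g\}$ precisely because $g$ is an atom. This observation is what converts the chain condition into the conclusion that $g$ itself must be one of the $g_i$'s. No additional machinery beyond Lemma~\ref{supportlemma} and the definitions of atom and codimension order is needed.
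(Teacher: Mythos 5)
Your proof is correct and follows essentially the same route as the paper: apply Lemma~\ref{supportlemma} to a codimension atom $g$ and use the atom property to force $g$ into the support of the generating set. The only (harmless) difference is that the paper takes the $g_i$ nonidentity and concludes $g_1=g$ directly from $g_1\leq_{_\perp}g$, whereas you locate the first partial product equal to $g$; both arguments work.
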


\begin{proof}
  Let $\mathcal{G}$ be a set of generators for $\HHSSG{\mol}$. 
  Applying Lemma~\ref{supportlemma}, we have that for each $g\neq1$ in $G$
  there exist nonidentity group elements $g_1,\ldots,g_k$ in $\supp(\mathcal{G})$ 
  such that $g_1\leq_{_{\perp}}g_1g_2\leq_{_{\perp}}\cdots\leq_{_{\perp}}g_1\cdots g_k=g$.
  In particular, $g_1\leq_{_{\perp}}g$. If $g$ is a codimension atom, then 
  since $g_1\neq1$ we must have $g_1=g$, and hence $g$ lies in $\supp(\mathcal{G})$.
\end{proof}

\begin{remark}
The exterior products in the description of cohomology
force a homogeneous generator supported on a group element
$g$ to have cohomological degree at least $\codim(g)$ (and no more than $\dim V=n$). 
In light of Corollary~\ref{codimsupport}, a set of
homogeneous generators for $\HHSSG{\mol}$ may well require elements
of maximum cohomological degree $n$.  For instance, in the group
$G(n,n,n)$ for $n\geq3$, the element $g=\diag(e^{2\pi i/n},\ldots,e^{2\pi i/n})$
is a codimension atom, and a homogeneous generator supported on $g$ must have
cohomological degree $\codim(g)=n$.  Thus, using Corollary~\ref{codimsupport},
we see that every set of homogeneous generators for $\HH^{\mol}(S(V),S(V)\#G(n,n,n))$
includes an element of cohomological degree $n$.
\end{remark}

\begin{corollary}
  Let $G$ be an irreducible complex reflection group.  Then the
  cohomology ring $\HH^{\bullet}(S(V),S(V)\#G)$ is generated
  in cohomological degrees $0$ and $1$
  if and only if $G$ is a Coxeter group or a monomial reflection group $G(m,1,n)$.
\end{corollary}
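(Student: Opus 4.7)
The plan splits the biconditional into two implications. The forward direction, asserting that $\HHSSG{\bullet}$ is generated in cohomological degrees $0$ and $1$ when $G$ is a Coxeter group or $G(m,1,n)$, is already available from Shepler and Witherspoon~\cite[Corollary 10.6]{SheplerWitherspoonCup}, so I would simply invoke it.

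For the converse I would argue by contrapositive. Suppose $G$ is an irreducible complex reflection group that is neither a Coxeter group nor $G(m,1,n)$. The main theorem of this paper then says that $\ell$ and $\codim$ fail to coincide on $G$, and Proposition~\ref{tfae} converts this into the existence of a codimension atom $g$ that is not a reflection. Since reflections are precisely the codimension atoms of codimension one, this atom satisfies $\codim(g)\geq2$. Corollary~\ref{codimsupport} forces $g$ to lie in the support of every generating set of $\HHSSG{\bullet}$, and the remark preceding that corollary shows that any homogeneous element supported on $g$ must have cohomological degree at least $\codim(g)\geq2$. Any homogeneous generating set therefore contains a generator of cohomological degree at least two, ruling out generation in degrees $0$ and $1$.

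The main obstacle is not the present corollary itself but the supporting infrastructure, all of which is already in place: the classification encoded in the main theorem (resting on the $G(m,p,n)$ analysis of Section~\ref{GMPN}, the rank two arguments of Section~\ref{RANKTWO}, and the GAP-aided class algebra constant computations of Section~\ref{EXCEPTIONAL}), the function-versus-poset equivalence of Proposition~\ref{tfae}, and the support lemma (Lemma~\ref{supportlemma}) underlying Corollary~\ref{codimsupport}. Given these ingredients the argument reduces to the short implication chain above; the only subtlety worth flagging is that "generated in degrees $0$ and $1$" has to be read in the homogeneous sense, which is precisely why the remark's lower bound on the cohomological degree of a homogeneous element supported on $g$ is the pivotal step.
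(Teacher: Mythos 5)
Your proposal is correct and follows essentially the same route as the paper: the forward direction via Shepler--Witherspoon's Corollary 10.6, and the converse by combining the existence of a nonreflection codimension atom (which the paper asserts directly from its classification, and which you trace more explicitly through the main theorem and Proposition~\ref{tfae}) with Corollary~\ref{codimsupport} and the degree bound $\codim(g)\geq 2$ forced by the exterior factor. The only difference is that you spell out the intermediate steps the paper leaves implicit.
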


\begin{proof}
  By Corollary~\ref{codimsupport}, the support in $G$ of a set of generators for
  $\HHSSG{\mol}$ must contain the set of codimension atoms.  It follows
  that any set of generators contains elements of cohomological 
  degree at least as great as the codimensions of the atoms in the
  codimension poset.
  If $G$ is not a Coxeter group and not a monomial reflection group $G(m,1,n)$,
  then there are nonreflection atoms
  in the codimension poset, so a generating set for $\HHSSG{\mol}$
  will necessarily include elements of cohomological degree greater than one.
  
  Conversely, Shepler and Witherspoon show in~\cite[Corollary 10.6]{SheplerWitherspoonCup}
  that if $G$ is a Coxeter group or a monomial reflection group
  $G(m,1,n)$, then $\HHSSG{\mol}$ can in fact be generated in degrees
  $0$ and $1$.
\end{proof}


\bibliographystyle{abbrv}
\bibliography{msreferences}

\end{document}